\newtheorem{definition}{Definition}[section]
\newtheorem{proposition}[definition]{Proposition}
\newtheorem{theorem}[definition]{Theorem}
\newtheorem{corollary}[definition]{Corollary}
\newtheorem{example}{Example}
\newtheorem{lemma}[definition]{Lemma}
\title{$p$-Laplacian first eigenvalues controls on  Finsler manifolds}
\author{Cyrille Combete, Serge Degla, Leonard Todjihounde}
\date{}
\begin{document}

\maketitle

\begin{abstract}
Given a Finsler manifold $(M,F)$, it is proved that the first eigenvalue of the Finslerian $p$-Laplacian is bounded above by a constant depending 
on $\ p$, the dimension of $M$, the Busemann-Hausdorff volume and  the reversibility constant of $(M,F)$.

For a Randers manifold $(M,F:=\sqrt{g}+\beta)$, where $g$ is a Riemannian metric on $M$ and $\beta$ an appropriate $1$-form on $M$, 
it is shown that the first eigenvalue $\lambda_{1,p}(M,F)$ of the Finslerian $p$-Laplacian defined by the Finsler metric $F$ 
is controled by the first eigenvalue 
$\lambda_{1,p}(M,g)$  of the Riemannian  $p$-Laplacian defined on $(M,g)$.

Finally, the Cheeger's inequality  for Finsler Laplacian is extended for $p$-Laplacian, with $p > 1$.
\end{abstract}

\section{Introduction} \label{intro}

%%%%%%%%%%%%%%%%%

The study of the $p$-Laplace operator $-$ and in particular of its first eigenvalue $-$ is a classical and important problem in Riemannian 
geometry. In \cite{Mat05,Mat12}, the author studies the first eigenvalue of the $p$-Laplacian $\Delta_p$ on a compact Riemannian manifold $M$ as a functional 
on the space of Riemannian metrics on $M$. He proved that on any compact manifold of dimension $n\geq 3$, 
there is a Riemannian metric of volume one such that the first 
eigenvalue of the $p$-Laplacian can be taken arbitrary large
and that the eigenvalue functional restried to the conformal class is bounded above for $1<p\leq n$.

 In Finsler geometry, there is no canonical way to introduce the Laplacian. 
Hence, several authors proposed different extensions of the standard Riemannian Laplacian to the Finsler setting like
Antonelli and Zastawniak  \cite{AZ}, Bao and Lackey  \cite{BL}, Barthelm\'e \cite{Bar}, 
Centor\'e \cite{Cen} and Shen \cite{Sh98}. In the last decade, the non-linear Shen's Finsler-Laplacian received a particular attention and
 Q. He and S-T Yin use it to introduce the $p$-Laplacian on Finsler manifolds \cite{HY14,HY16}. 
They established some inequalities related to the first eigenvalue and obtained 
a regularity theorem of its associated functions.

Eigenfunctions of the $p$-Laplacian have weaker regularities in the Finslerian setting than the Riemannian
one, due to the non-linearity of the Finslerian $p$-Laplacian. 

In \cite{MT1}, the author shows that a canonical smooth Riemannian metric can be associated to any Finsler metric $F$. 
This Riemannian metric is called Binet-Legendre metric and is bi-lipschitz equivalent to $F$ with lipschitz constant depending only on the dimension of the manifold and on the 
reversibility constant of $F$ (see Section 2.3). It allows  
us to control the first eigenvalue of the Finsler $p$-Laplacian and to prove our main result:

\begin{theorem}\label{A}
Let $(M,F)$ be a compact Finsler $n$-dimensional manifold. Then,  for any $p\in (1,n]$, there exists a constant $C:=C(n,p,\kappa_F,[F])$ depending only on the dimension $n$, $p$, 
 the reversibility constant $\kappa_F$ and the conformal class 
 $[F]$ of $F$ such that,
 \[
  \lambda_{1,p}(M,F)Vol(M,F)^{\frac{p}{n}}\leq C(n,p,\kappa_F,[F]). 
 \]
\end{theorem}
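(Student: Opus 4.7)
The plan is to reduce the Finslerian eigenvalue estimate to the corresponding Riemannian one by passing through the Binet-Legendre metric $g_F$, and then to invoke Matei's bound for the conformal class of a Riemannian metric on a manifold of dimension $n\geq 3$.

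First, I would write $\lambda_{1,p}(M,F)$ variationally as the infimum of the Rayleigh quotient $\int_M F^*(du)^p\, dv_F / \int_M |u|^p\, dv_F$ over admissible test functions, where $F^*$ is the co-Finsler norm and $dv_F$ the Busemann–Hausdorff volume. I would then use the bi-Lipschitz equivalence $c_1(n,\kappa_F)\, g_F \le F^2 \le c_2(n,\kappa_F)\, g_F$ supplied by \cite{MT1} (Section 2.3) to compare $F^*(du)$ with the $g_F$-norm $|du|_{g_F}$ and $dv_F$ with $dv_{g_F}$. Plugging a $g_F$-minimizer into the Finslerian Rayleigh quotient yields
\[
  \lambda_{1,p}(M,F)\, \mathrm{Vol}(M,F)^{p/n} \;\le\; C_1(n,p,\kappa_F)\, \lambda_{1,p}(M,g_F)\, \mathrm{Vol}(M,g_F)^{p/n},
\]
so everything reduces to bounding the right-hand side.

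Next, I would observe that the Binet–Legendre construction is conformally natural: a conformal change $\tilde F = e^\sigma F$ produces $g_{\tilde F} = e^{2\sigma} g_F$ (possibly up to a harmless dimensional factor), so the conformal class $[g_F]$ of the Riemannian metric is determined by the conformal class $[F]$ of the Finsler metric. At this point I can apply Matei's theorem \cite{Mat05,Mat12}, which states that for $1 < p \le n$ the functional $g \mapsto \lambda_{1,p}(M,g)\,\mathrm{Vol}(M,g)^{p/n}$ is bounded above on any fixed conformal class by a constant depending only on $n$, $p$ and the class. This gives
\[
  \lambda_{1,p}(M,g_F)\, \mathrm{Vol}(M,g_F)^{p/n} \;\le\; C_2\bigl(n,p,[g_F]\bigr) \;=\; C_2\bigl(n,p,[F]\bigr),
\]
and combining with the previous inequality produces the claimed constant $C(n,p,\kappa_F,[F])$.

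The main technical obstacle I expect is the first step: verifying carefully that the bi-Lipschitz comparison between $F^*$ and $|\cdot|_{g_F}$ on the cotangent bundle, together with the comparison of Busemann–Hausdorff and Riemannian volumes, yields a cleanly controlled constant depending only on $n$ and $\kappa_F$. In particular, since $F$ is only quasi-symmetric, one must treat $du$ and $-du$ separately when estimating $F^*(du)^p$, and one must be sure the Busemann–Hausdorff density satisfies the expected two-sided bound in terms of the Riemannian volume of $g_F$. A secondary technical point is the conformal naturality of the Binet–Legendre construction, which needs to be checked from its definition so that $[g_F]$ depends only on $[F]$; once these two comparisons are in place, the rest is a direct application of the existing Riemannian result.
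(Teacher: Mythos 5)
Your proposal is correct and follows essentially the same route as the paper: compare $\lambda_{1,p}(M,F)$ with $\lambda_{1,p}(M,g_F)$ via the bi-Lipschitz and volume estimates for the Binet--Legendre metric (the paper's Proposition~\ref{pp:binet} and Theorem~\ref{binet}), invoke Matei's conformal bound for the Riemannian $p$-Laplacian, and use the conformal naturality of $F\mapsto g_F$ to convert dependence on $[g_F]$ into dependence on $[F]$. The only cosmetic difference is that the paper normalizes $g_F$ to unit volume before applying Matei's theorem and then tracks the ratio $Vol(M,F)/Vol(M,g_F)$ explicitly, whereas you use the scale-invariant form of the same bound directly.
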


Randers metrics are an important class of Finsler metrics. They are Finsler metrics of the form $F:=\sqrt{g}+\beta$ 
where $g$ is a Riemannian metric and $\beta$ a 
$1$-form which norm with respect to the metric $g$ is smaller than one. 
It is interesting to know the relations between geometric quantities related to 
$F$ and $g$ respectively. We prove the following

\begin{theorem}\label{B}
If $(M,F:=\sqrt{g}+\beta)$ is a Randers manifold endowed with the Holmes-Thompson volume form $d\mu_{HT}$ then, for any $p>1$, we have
 \[
  \frac{1}{\kappa_F^p}\lambda_{1,p}(M,g)\leq \lambda_{1,p}(M,F) \leq \kappa_F^p \lambda_{1,p}(M,g),
 \]
where $\lambda_{1,p}(M,g)$ is the first eigenvalue of the $p$-Laplacian on the Riemannian manifold $(M,g)$ and $\kappa_F$, the reversibility 
constant of $(M,F)$.
\end{theorem}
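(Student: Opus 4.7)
My plan is to compare the Rayleigh quotients defining $\lambda_{1,p}(M,F)$ and $\lambda_{1,p}(M,g)$ pointwise on $M$, using the bi-Lipschitz equivalence of $F$ with $\sqrt{g}$ coming from the Randers structure, together with the fact that on a Randers manifold the Holmes--Thompson volume form equals the Riemannian volume form $dv_g$. The first nontrivial eigenvalue of the $p$-Laplacian in both settings is characterised by the variational principle
\[
  \lambda_{1,p}(M,F)\;=\;\inf_{u}\frac{\int_M F^*(du)^p\,d\mu_{HT}}{\int_M |u|^p\,d\mu_{HT}},
\]
the infimum being taken over nonzero $u\in W^{1,p}(M)$ satisfying $\int_M |u|^{p-2}u\,d\mu_{HT}=0$, and similarly for $g$. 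Because $d\mu_{HT}=dv_g$ in the Randers case, the denominator and the admissibility constraint are the same in the two problems, so the comparison of eigenvalues reduces to a pointwise comparison of the energy densities $F^*(du)^p$ and $|du|_{g^*}^p$.

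The key pointwise ingredient is the symmetrisation identity
\[
  \sqrt{g(v,v)}\;=\;\tfrac{1}{2}\bigl(F(x,v)+F(x,-v)\bigr),
\]
which is immediate from $F=\sqrt{g}+\beta$ since the linear part $\beta(v)$ cancels under symmetrisation. Combining this with the definition $F(x,-v)\le \kappa_F F(x,v)$ and its counterpart $F(x,v)\le \kappa_F F(x,-v)$ gives
\[
  \tfrac{1}{\kappa_F}\sqrt{g(v,v)}\;\le\;F(x,v)\;\le\;\kappa_F\sqrt{g(v,v)}.
\]
Dualising on each fibre $T_x M$ (using that $c_1\alpha\le F\le c_2\alpha$ implies $c_2^{-1}\alpha^*\le F^*\le c_1^{-1}\alpha^*$ by the usual support-function argument) produces the cometric analogue
\[
  \tfrac{1}{\kappa_F}|\xi|_{g^*}\;\le\;F^*(x,\xi)\;\le\;\kappa_F|\xi|_{g^*}.
\]

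Raising the cometric inequality to the $p$-th power and integrating against $d\mu_{HT}=dv_g$ yields, for every admissible test function $u$,
\[
  \tfrac{1}{\kappa_F^p}\int_M|du|_{g^*}^p\,dv_g\;\le\;\int_M F^*(du)^p\,d\mu_{HT}\;\le\;\kappa_F^p\int_M|du|_{g^*}^p\,dv_g.
\]
Dividing by $\int_M |u|^p\,d\mu_{HT}=\int_M |u|^p\,dv_g$ and passing to the infimum over admissible $u$ delivers both inequalities of Theorem~\ref{B} at once.

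The only genuinely nontrivial step is the pointwise bi-Lipschitz bound with the sharp constant $\kappa_F$; I expect this to follow cleanly from the Randers symmetrisation identity above. Once this, its dual version, and the identification $d\mu_{HT}=dv_g$ on Randers spaces (a classical computation of the Lebesgue volume of the dual unit ball $\{\xi:F^*(x,\xi)\le 1\}\subset T_x^*M$) are in place, the spectral comparison becomes a direct manipulation of Rayleigh quotients with no further difficulty.
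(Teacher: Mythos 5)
Your proposal is correct and follows the same overall strategy as the paper: identify the Holmes--Thompson measure with $dV_g$ (so the denominators and the constraint $\int_M|u|^{p-2}u\,d\mathfrak{m}=0$ agree in both variational problems), establish the pointwise bound $\kappa_F^{-1}|du|_{g^*}\le F^*(du)\le\kappa_F|du|_{g^*}$, raise to the $p$-th power, and compare Rayleigh quotients. The one genuine difference is how you obtain the pointwise bound. The paper invokes Shen's explicit formula
\[
F^*(df)=\frac{\sqrt{(1-\|\beta\|^2)|df|^2+\langle\beta,df\rangle^2}-\langle\beta,df\rangle}{1-\|\beta\|^2}
\]
(its Lemma~\ref{lem:3.1}) and estimates the numerator and denominator directly, using $\kappa_F=(1+\mathbf{b})/(1-\mathbf{b})$; you instead use the symmetrisation identity $\sqrt{g(v,v)}=\tfrac12(F(v)+F(-v))$ (or, equivalently, the elementary bound $(1-\mathbf{b})\sqrt{g}\le F\le(1+\mathbf{b})\sqrt{g}$) to get the primal comparison $\kappa_F^{-1}\sqrt{g}\le F\le\kappa_F\sqrt{g}$, and then dualise via the support-function argument. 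Your route is cleaner and avoids the slightly delicate algebra in the paper's estimate (whose final line, ``$F^*(df)\ge(1-\mathbf{b})|df|\ge\kappa_F|df|$'', is in fact a typo for $\ge\kappa_F^{-1}|df|$), at the cost of requiring the duality step $c_1\alpha\le F\le c_2\alpha\Rightarrow c_2^{-1}\alpha^*\le F^*\le c_1^{-1}\alpha^*$, which is standard. Both arguments rest equally on the identification $d\mathfrak{m}_{HT}=dV_g$ for Randers metrics, which the paper also asserts without proof.
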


In \cite{Ch}, Cheeger introduced for a closed Riemannian manifold $(M,g)$
an geometric invariant $\mathbf{h}(M)$ called Cheeger invariant, and he proved that $4 \lambda_{1,2}(M)\geq \mathbf{h}^2(M)$. The authors in 
\cite{YZ13} generalize this inequality for the Finslerian Laplacian. In this paper we extend their result to the Finslerian $p$-Laplacian 
for $p > 1$.

The content of the paper is organized as follows. In section \ref{section2} , we recall some fundamental notions which are necessary and 
important for this article. Section \ref{section3} and \ref{section4} are devoted to the proofs of Theorem \ref{A} and Theorem \ref{B} respectively. We prove the 
Cheeger's type inequaliy in the last section.

\section{Preliminaries}\label{section2}
Let $M$ be a connected, $n$-dimensional smooth manifold without boundary. Given a local coordinate system $(x^i)_{i=1}^n$ on an open set $U$ of 
$M$, we will use 
the coordinate $(x^i,v^i)_{i=1}^n$ of $TU$ such that for all $v\in T_xM$, $ x\ \in U$,
\[
 v:= v^i\left.\frac{\partial}{\partial x^i}\right|_x.
\]

\subsection{Finsler geometry}
\begin{definition}\label{Finsler}
 A Finsler metric on $M$ is a nonnegative function $F: TM\rightarrow [0,\infty)$ satisfying:
  \begin{enumerate}
   \item (Regularity) $F$ is $C^\infty$ on $TM\backslash O$, where $O$ stands for the zero section,
   \item (Positive 1-homogeneity) It holds $F(cv)=cF(v)$ for all $v \in TM$ and $c\geq 0$,
   \item (Strong convexity) The $n\times n$ matrix 
   \begin{equation}\label{eq:2}
    (g_{ij}(v))_{1\leq i,j\leq n}:=(\frac{1}{2}\frac{\partial^2(F^2)}{\partial v^i \partial v^j}(v))_{1\leq i,j\leq n}
   \end{equation}
is positive-definite for all $v \in T_xM\backslash \{0\}$.
  \end{enumerate}
  \end{definition}
% We call such a pair $(M,F)$ a $C^\infty$-Finsler manifold. 

 Remark that   for each $v \in T_xM\backslash\{0\}$, the positive-definite matrix $(g_{ij}(v))_{1\leq i,j\leq n}$ in the Definition \ref{Finsler} defines
 the Riemannian structure 
 $g_v$ of $T_xM$ via 
 \[
  g_v\left(\sum_{i=1}^n a_i \frac{\partial}{\partial x^i},\sum_{j=1}^n b_j \frac{\partial}{\partial x^j}\right):= \sum_{i,j=1}^n g_{ij}(v) a_ib_j.
 \]

 The \emph{reversibility constant} of $(M,F)$ is defined by 
 \[
  \kappa_F:=\sup_{x\in M}\sup_{v\in T_xM\backslash\{0\}}\frac{F(v)}{F(-v)} \in [1,\infty].
 \]
$F$ is said to be reversible if $\kappa_F=1$, that is $F(v)=F(-v),\ \forall\ x\in T_xM $.

\vspace{0.2cm}
 The dual metric $F^*:T^*M\rightarrow [0,\infty)$ of $F$ on $M$ is defined for any $\alpha \in T^*M$ by
 \[
  F^*(\alpha):= \sup_{v\in T_xM,F(v)\leq 1 }\alpha(v) = \sup_{v \in T_xM,F(v)=1}\alpha(v).
 \]

 One also define the $2$-uniform concavity constant as
 \[
\sigma_F:=\sup_{x\in M}\sup_{v,w \in T_xM\backslash\{0\}}\frac{g_v(w,w)}{F(w)^2}=\sup_{x\in M}\sup_{\alpha,\beta \in T^*_xM\backslash\{0\}}\frac{F^*(\beta)}{g^*_\alpha(\beta,\beta)} 
\in [1,\infty].
\]
 $F$ is Riemannian if and only if $\sigma_F=1$ (see \cite{Oht16}).

 \vspace{0.5cm}
Given a vector field $X:= X^i \frac{\partial}{\partial x^i}$, the covariant derivate of $X$ by $v\in T_xM$ with the reference $w\in T_xM\backslash\{0\}$ 
is defined by
\[
 D_v^w X(x):=\left\{ v^j \frac{\partial X^i}{\partial x^j}(x) + \Gamma^i_{jk}(w)v^jX^k(x) \right\} \frac{\partial}{\partial x^i},
\]
where $\Gamma_{jk}^i(w)$ are the coefficients of the Chern connection.

The flag curvature of the plane spanned by two linearly independent vector $V,W \in T_xM\backslash\{0\}$ is given by 
\[
 K(V,W):=\frac{g_V(R^V(V,W)W,V)}{g_V(V,V)g_V(W,W)-g_V(V,W)^2},
\]
where $R^V$ is the Chern curvature:
\[
 R^V(X,Y)Z:= D_X^VD_Y^VZ+D_Y^VD_X^VZ-D_{[X,Y]}^VZ.
\]
The Ricci curvature of $(M,F)$ is defined by 
\[
 Ric(V):= \sum_{i=1}^{n-1} K(V,e_i),
\]
where $\{e_1, e_2, \dots, e_n=\frac{V}{F(V)}  \}$ is an orthonormal basis of $T_xM$ with respect to $g_V$.

%%%%%%%%%%%%%%%%%%%%%%%%%%%%%%%%%%%%%%%%%%%%%%subsection%%%%%%%%%%%%%%%%%%%%%%%%%%%%%%%%%%%%%%%%%%%%%%%%%%%%%%%%%%%%%%%%%%%%%%%%%%%%%%%%%%%%%%%%%%%%%%%%%%%
\subsection{Finsler p-laplacian}

Denote by $J^*:T^*M\rightarrow TM$ the Legendre transform which assigns to each $\alpha \in T^*_xM$ the unique maximizer of the function 
$v\mapsto \alpha(v)-\frac{1}{2}F^2(x,v)$ on $T_xM$. The quantity $J^*(x,\alpha)$ is characterized as the unique vector $v\in T_xM$ with $F(x,v)=F^*(x,\alpha)$ and 
$\alpha(v)=F^*(x,\alpha)F(x,v)$.

For a differentiable function $f:M\rightarrow \mathbb{R}$, the gradient vector of $f$ at $x$ is define as the Legendre transform of the derivate of $f$:
$\nabla f(x):= J^*(x,Df(x))$. In coordinates, we have
\[
 \nabla f(x)=  \left\{
	\begin{array}{ll}
	 g^{ij}(x,Df(x))\frac{\partial f}{\partial x^j}\frac{\partial}{\partial x^i},\ \text{if}\  df(x)\neq 0  \\
	 0, \hspace{3cm}            \ \text{if}\    df(x)=0
	 
	\end{array}
  \right.
\]
where $g^{ij}(x,\alpha):= \frac{1}{2}\frac{\partial^2F^*(x,\alpha)^2}{\partial\alpha^i\partial\alpha^j}$. Remark that 
$(g^{ij}(x,\alpha))_{ij}$ is the inverse matrix of $(g_{ij}(x,J^*(x,\alpha)))_{ij}$.

%  For a vector field $V$ on $M$, we set $M_V:=\{ x \in M|\ V(x)\neq 0 \}$, and $M_f:= M_{\nabla f}$.
% For a differentiable vector field $V$ on $M$ and $x \in M_V$, we define $\nabla V(x) \in T_x^*M \otimes T_xM$ as 
% \begin{equation}
%  \nabla V(v):= D^V_vV(x) \in T_xM,\ v\in T_xM.
% \end{equation}

% We also set $\nabla^2f(x):= \nabla(\nabla f)(x)$ for a twice differentiable function $f:M\rightarrow \mathbb{R}$ and $x \in M_f$.
 
 \vspace{0.5cm}
  We fix an abitrary positive $C^\infty$-measure $\mathfrak{m}$ on $M$ as our base measure. In a local coordinates system, 
  the measure element is given by 
  $d\mathfrak{m}:=e^{\Phi} dx^1\dots dx^n$. Usually, the Busemann-Hausdorff volume form $d\mathfrak{m}_{BH}$ and the Holmes-Thompson volume form 
  $d\mathfrak{m}_{HT}$ are used. They are defined by 
\[
 d\mathfrak{m}_{BH}:= \frac{\omega_n}{Vol(B_xM)}dx^1 \wedge \dots \wedge dx^n,
\]
and
\[
 d\mathfrak{m}_{HT}:= \left( \frac{1}{\omega_n} \int_{B_xM} det g_{ij}(x,v) dv^1\wedge\dots\wedge dv^n \right) dx^1\wedge\dots\wedge dx^n,
\]
 where $B_xM:=\{ v\in T_xM: F(x,v)<1\}$ and $\omega_n$ denotes the volume of $n$-dimensional euclidean ball.

  The divergence of a differentiable vector field $V$ on $M$ with respect to $\mathfrak{m}$ is defined by
  \[
   div_{\mathfrak{m}}V:=\sum_{i=1}^n \left(\frac{\partial V^i}{\partial x^i}+V^i\frac{\partial \Phi}{\partial x^i}\right).
  \]

  Denote by $W^{1,p}(M)$ the completion of $C^\infty(M)$. For a function $f\in W^{1,p}(M)$, its Finsler p-Laplacian is defined as
  \[
   \Delta_p(f):=div_\mathfrak{m}(F(\nabla f)^{p-2}\nabla f):= div_\mathfrak{m}(|\nabla f|^{p-2}\nabla f),
  \]
where the equality is in the distibutional sense.

For $p=2$, we obtain the non-linear Shen's Finsler Laplacian:
\[
 \Delta_2 (f ):=\Delta (f) = div_{\mathfrak{m}}(\nabla f).
\]

This operator is naturally associated to the canonical energy functional $E$ defined on $W^{1,p}(M)\backslash\{ 0\}$ by 
\[
 E(f):= \frac{\int_M |\nabla f|^p\ d\mathfrak{m} }{\int_M |f|^p\ d\mathfrak{m}}.
\]

The first (closed) eigenvalue of the Finsler $p$-Laplacian is defined by
\[
 \lambda_{1,p}(M):= \inf_{f \in \mathcal{H}^p_0} E(f),
\]
where $\mathcal{H}^p_0:=\{ f \in W^{1,p}(M)\backslash \{0\}:\ \int_M |f|^{p-2}f\ d\mathfrak{m}=0 \}$. An eigenfunction related to the first 
eigenvalue is a function $f \in W^{1,p}(M)$ which satisfies $\Delta_p f + \lambda_{1,p}(M) |f|^{p-2}f=0$. We have the following caracterisation: for all
$\varphi \in W^{1,p}(M),$
\[
 \int_M |\nabla f|^{p-2} d\varphi(\nabla f)\ d\mathfrak{m} = \lambda_{1,p}(M) \int_M |f|^{p-2}f\varphi\  d\mathfrak{m}.
\]
%Let $\Omega \subset M$ be a domain with compact closure and nonempty boundary $\partial\Omega$. We define also the first Dirichlet eigenvalue 
%$\mu_{1,p}(M)$ as 
%\[
% \mu_{1,p}(M):= \inf_{f \in W^{1,p}_0(\Omega)} \frac{\int_\Omega |\nabla f|^p d\mathfrak{m}}{\int_\Omega |f|^p d\mathfrak{m}}
%\]

\vspace{0.5cm}

Now, we will  recall the construction of a canonical Riemannian metric associated to the Finsler manifold $(M,F)$. See \cite{MT1,MT2} for more details.

%%%%%%%%%%%%%%%%%%%%%%%%%%%%%%%%%%%%%%%%%%%%%%%%%%%%%%%%%%subsection%%%%%%%%%%%%%%%%%%%%%%%%%%%%%%%%%%%%%%%%%%%%%%%%%%%%%%%%%%%%%%%%%%%%%%%%%%%%%%%%%%%%%%%

\subsection{Binet-Legendre metric}

In this part, $d\mathfrak{m}_F$ will always denote the Busemann-Hausdorff measure induced by the metric $F$ on $M$. 
% It's defined as
% \[
%  dm_F:=\frac{Vol(\mathbb{B}^n)}{Vol(B_xM)}dx^1\wedge\dots\wedge dx^n,
% \]
% where $B_xM:=\{ v\in T_xM:\ F(x,v)<1\}$.

Let define a scalar product on the cotangent spaces $T_x^*M$, $(x\in M)$ by
\[
 g^*_F(\alpha,\beta):=\frac{n+2}{\lambda(B_xM)}\int_{B_xM} \alpha(v).\beta(v)\ d\lambda(v),
\]
where $\lambda$ is a Lebesgue measure on $T_xM$.

The Binet-Legendre metric $g_F$ associated to the Finsler metric $F$ is the Riemannian metric dual to the scalar product $g_F^*$ .

\begin{theorem}\cite{MT2}\label{binet}
 Let $(M,F)$ be a Finsler $n$-manifold with finite reversibility constant $\kappa_F$ and $g_F$ its associated Binet-Legendre metric. Then
 \begin{itemize}
  \item[(i)] The metric $g_F$ is as smooth as $F$;
  \item[(ii)] We have 
  \[
   (\kappa_F \sqrt{2n})^{-n-1} \sqrt{g_F}\leq F \leq (\kappa_F\sqrt{2n})^{n+1}\sqrt{g_F};
  \]
  \item[(iii)] If $dV_{g_F}$ denotes the Riemannian volume density of $g_F$, there is a constant $k$ such that
  \[
   \omega_n k^{-n} dV_{g_F}\leq d\mathfrak{m}_F\leq \omega_n k^n dV_{g_F},
  \]
  where $\omega_n$ denote the volume of the standard $n$-dimensional Euclidean ball.
In particular, $dV_{g_F} \leq d\mathfrak{m}_F$.
 \end{itemize}

\end{theorem}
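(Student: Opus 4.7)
The plan is to handle the three parts separately: part (i) is a smoothness-of-parameter-integral argument, part (ii) is the main technical step and amounts to a convex-geometric comparison between the Finsler unit ball and an ellipsoid, and part (iii) is an immediate consequence of (ii).

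For (i), I would observe that although the indicator function of $B_xM$ is not smooth in $v$, the integral
$g_F^*(\alpha,\beta) = \frac{n+2}{\lambda(B_xM)}\int_{B_xM}\alpha(v)\,\beta(v)\,d\lambda(v)$
depends smoothly on $x$ because $F$ is smooth off the zero section and $B_xM$ is a radial convex body that varies smoothly with $x$. Concretely, parametrize $B_xM$ by polar coordinates based on the Finsler radial function $v\mapsto v/F(x,v)$; this rewrites the integral as one over a fixed Euclidean ball with an integrand that is smooth jointly in $(x,v)$, so the result is smooth in $x$. Smoothness of $g_F=(g_F^*)^{-1}$ then follows by matrix inversion, with the same regularity class as $F$.

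For (ii), the strategy is to compare $g_F^*(\alpha,\alpha)$ with $F^*(\alpha)^2 = \sup_{v\in B_xM}\alpha(v)^2$. The easy direction is clear: since $|\alpha(v)|\leq F^*(\alpha)$ for every $v\in B_xM$, one immediately has $g_F^*(\alpha,\alpha)\leq (n+2)F^*(\alpha)^2$, with no dependence on $\kappa_F$. The hard direction is a reverse bound, which requires showing that $|\alpha(v)|$ is close to its supremum on a set of substantial Lebesgue measure inside $B_xM$. If $B_xM$ were centrally symmetric (i.e.\ $F$ reversible) this would follow from a classical symmetric-body inequality such as John's theorem or a Hensley-type slicing bound. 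The role of $\kappa_F$ is to measure how far $B_xM$ deviates from being symmetric; I would pass to the symmetrized body $K:=B_xM\cap(-B_xM)$, which is obviously contained in $B_xM$ and, by the reversibility estimate $F(-v)\leq\kappa_F F(v)$, contains the scaled copy $\kappa_F^{-1}B_xM$. Hence $\lambda(K)\geq\kappa_F^{-n}\lambda(B_xM)$, and applying the symmetric-body estimate to $K$ produces the bi-Lipschitz bound with a constant of the stated order $(\kappa_F\sqrt{2n})^{n+1}$. This lower bound, together with the book-keeping of the dimensional factors, is the main obstacle; I would not attempt to optimise the constants.

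For (iii), once (ii) is established the Finsler and Riemannian unit balls at $x$ are sandwiched in each other after scaling by the bi-Lipschitz constant $k:=(\kappa_F\sqrt{2n})^{n+1}$, so their Lebesgue measures satisfy $k^{-n}\lambda(B_x^{g_F}M)\leq\lambda(B_xM)\leq k^n\lambda(B_x^{g_F}M)$. Inserting this into the coordinate expressions $d\mathfrak{m}_F=(\omega_n/\lambda(B_xM))\,dx^1\wedge\cdots\wedge dx^n$ and $\sqrt{\det g_F}\,dx^1\wedge\cdots\wedge dx^n=(\omega_n/\lambda(B_x^{g_F}M))\,dx^1\wedge\cdots\wedge dx^n=dV_{g_F}$ yields the desired two-sided comparison, and the sharper one-sided inequality $dV_{g_F}\leq d\mathfrak{m}_F$ comes from applying John's theorem in the Euclidean fibres, which gives $\lambda(B_xM)\leq\omega_n/\sqrt{\det g_F}$ directly, without the loss from the bi-Lipschitz constant.
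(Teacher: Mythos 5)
First, a point of reference: the paper does not prove this theorem at all. It is quoted verbatim from Matveev--Troyanov \cite{MT2} and used as a black box, so there is no internal proof to compare against. Judged on its own terms, your sketch reconstructs the right overall architecture: polar coordinates over a fixed sphere (writing $\int_{B_xM}\alpha(v)\beta(v)\,d\lambda(v)$ as $\frac{1}{n+2}\int_{S^{n-1}}\alpha(\omega)\beta(\omega)F(x,\omega)^{-n-2}\,d\omega$) for smoothness in (i); a convexity comparison between $B_xM$ and an ellipsoid for (ii); and, for the final assertion of (iii), the Blaschke--John inequality stating that the Legendre ellipsoid of inertia of a convex body (which is exactly the $g_F$-unit ball) has volume at least that of the body. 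Parts (i) and (iii) are essentially complete modulo routine computation.

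The genuine gaps are in part (ii), which is the substance of the theorem. The ``easy direction'' is wrong as stated: for an irreversible metric one does \emph{not} have $|\alpha(v)|\leq F^*(\alpha)$ on $B_xM$, only $\alpha(v)\leq F^*(\alpha)$; the negative part is controlled by $-\alpha(v)\leq F^*(-\alpha)\leq \kappa_F F^*(\alpha)$, so even the upper bound $g_F^*(\alpha,\alpha)\leq (n+2)\kappa_F^2 F^*(\alpha)^2$ already carries a factor of $\kappa_F$, contrary to your claim that this direction has no dependence on $\kappa_F$. More seriously, the hard direction is left at the level of ``a constant of the stated order'': the theorem asserts the specific constant $(\kappa_F\sqrt{2n})^{n+1}$, which is what the rest of the paper propagates (Proposition \ref{pp:binet}, Theorem \ref{main}), and your symmetrization $K:=B_xM\cap(-B_xM)\supseteq \kappa_F^{-1}B_xM$ together with an unnamed ``symmetric-body estimate'' neither produces that constant nor verifies that whatever constant it does produce dominates it. Finally, your entire comparison lives on the cotangent side ($g_F^*$ versus $F^*$), whereas the statement compares $g_F$ with $F$ on the tangent side; the dualization step, which swaps and inverts the two constants, is never carried out. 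None of this invalidates the strategy, but as written the quantitative heart of (ii) is asserted rather than proved.
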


\begin{proposition}\label{pp:binet}
 Let $(M,F)$ be a Finsler $n$-manifold with finite reversibility constant $\kappa_F$ and $g_F$ its associated Binet-Legendre metric. Then
 \[
  \frac{1}{(\kappa_F\sqrt{2n})^{p(n+1)}\omega_nk^n} \leq \frac{\lambda_{1,p}(M,F)}{\lambda_{1,p}(M,g_F)}\leq (\kappa_F\sqrt{2n})^{p(n+1)}\omega_nk^n.
 \]

\end{proposition}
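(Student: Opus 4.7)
The strategy is to compare the Rayleigh quotients defining $\lambda_{1,p}(M,F)$ and $\lambda_{1,p}(M,g_F)$ directly, using both parts (ii) and (iii) of Theorem~\ref{binet}. The pointwise comparison of norms will control the numerator, and the measure comparison will control both numerator and denominator; the only real subtlety is that the admissibility constraint for $\mathcal{H}_0^p$ depends on the measure and therefore cannot be transported directly between the two problems.

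First I would dualize the bi-Lipschitz estimate (ii). Setting $C:=(\kappa_F\sqrt{2n})^{n+1}$, the inequality $C^{-1}\sqrt{g_F}\leq F\leq C\sqrt{g_F}$ is an inequality of Minkowski norms on each tangent space, so comparing unit balls yields the reverse inequality for the dual norms, $C^{-1}\sqrt{g_F^*}\leq F^*\leq C\sqrt{g_F^*}$. Because $F(\nabla f)=F^*(df)$ by the Legendre-transform definition of the gradient, and because $|\nabla f|_{g_F}=\sqrt{g_F^*(df,df)}$, raising to the $p$-th power gives
\[
C^{-p}\,|\nabla f|_{g_F}^{\,p}\;\leq\;F(\nabla f)^p\;\leq\;C^{p}\,|\nabla f|_{g_F}^{\,p}
\]
pointwise on $M$, which is exactly what is needed to compare the Dirichlet energies.

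The constraint $\int_M|f|^{p-2}f\,d\mathfrak{m}=0$ depends explicitly on $\mathfrak{m}$, so a test function valid for one problem is typically not valid for the other. To bypass this I would use the equivalent characterization
\[
\lambda_{1,p}(M)\;=\;\inf_{f\text{ non-constant}}\frac{\int_M|\nabla f|^{p}\,d\mathfrak{m}}{\displaystyle\min_{c\in\mathbb{R}}\int_M|f-c|^{p}\,d\mathfrak{m}},
\]
which holds because $c\mapsto\int_M|f-c|^p\,d\mathfrak{m}$ is strictly convex with derivative $-p\int_M|f-c|^{p-2}(f-c)\,d\mathfrak{m}$, so the minimizer in $c$ is exactly the translate that places $f-c$ in $\mathcal{H}_0^p$. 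This reformulation removes any reference to the measure-dependent normalization.

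With this in place, for any non-constant $f$ the numerator of the Finslerian Rayleigh quotient is bounded via the dualized pointwise estimate and the measure comparison $d\mathfrak{m}_F\leq\omega_nk^n\,dV_{g_F}$, giving $\int|\nabla f|_F^p\,d\mathfrak{m}_F\leq C^{p}\omega_nk^n\int|\nabla f|_{g_F}^{\,p}\,dV_{g_F}$. The denominator is bounded from below using the ``in particular'' part of (iii), $dV_{g_F}\leq d\mathfrak{m}_F$, to obtain $\min_c\int|f-c|^p\,d\mathfrak{m}_F\geq\min_c\int|f-c|^p\,dV_{g_F}$. Dividing and taking the infimum over $f$ yields $\lambda_{1,p}(M,F)\leq C^{p}\omega_nk^n\,\lambda_{1,p}(M,g_F)$. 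The reverse inequality follows by an entirely symmetric argument, interchanging the roles of $F$ and $\sqrt{g_F}$ and using the opposite halves of (ii) and (iii). The main technical hurdle is the normalization mismatch described above; once the $\min_c$-characterization is in hand, the remainder is a routine combination of the Binet-Legendre comparison estimates.
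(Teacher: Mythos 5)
Your proof is correct and follows essentially the same route as the paper's: a direct comparison of the two Rayleigh quotients using parts (ii) and (iii) of Theorem \ref{binet} (with $d\mathfrak{m}_F\leq\omega_nk^n\,dV_{g_F}$ applied to the gradient term and $dV_{g_F}\leq d\mathfrak{m}_F$ to the $|f|^p$ term), arriving at the same constant $(\kappa_F\sqrt{2n})^{p(n+1)}\omega_nk^n$. The paper's one-line proof is silent on the two points you spell out --- the passage from the comparison of $F$ with $\sqrt{g_F}$ to the dual comparison of $F^*$ with $\sqrt{g_F^*}$, and the fact that the constraint $\int_M|f|^{p-2}f\,d\mathfrak{m}=0$ defining $\mathcal{H}_0^p$ depends on the measure and so cannot be transported between the two problems --- and your $\min_{c}$ reformulation of the eigenvalue is a clean and correct way to close the second of these.
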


\begin{proof}

 Let $f\in C^\infty(M)$ such that $\int_M |f|^pd\mathfrak{m}_F\neq 0$.
 
 From theorem \ref{binet}, we have
 \begin{eqnarray*}
  \frac{1}{(\kappa_F\sqrt{2n})^{p(n+1)}\omega_nk^n}\frac{\int_M \|\nabla f\|^p\ dV_{g_F}}{\int_M |f|^p\ dV_{g_F}} &\leq& 
  \frac{\int_M F(\nabla f)^p\ d\mathfrak{m}_F}{\int_M |f|^p\ d\mathfrak{m}_F}\\
  &\leq& (\kappa_F\sqrt{2n})^{p(n+1)}\omega_nk^n\frac{\int_M \|\nabla f\|^p\ dV_{g_F}}{\int_M |f|^p\ dV_{g_F}},
 \end{eqnarray*}
which yields the result.
 
\end{proof}

\begin{definition}
 Two Finsler metrics $F_0$ and $F$ defined on a smooth manifold are called bi-Lipschitz if there exists a constant $C>1$ such that, for any $(x,v)\in TM$, 
 \begin{equation}\label{eq:main}
  C^{-1}F_0(x,v)\leq F(x,v)\leq CF_0(x,v).
 \end{equation} 
\end{definition}

\begin{example}
   Let $(M,g)$ be a Riemannian manifold and $\beta_1,\beta_2$ such that 
  \[
  0\leq \sup_{x \in M}\|(\beta_1)_x\|_g :=b_1 \leq b_2 := \sup_{x\in M} \|(\beta_2)_x\|_g <1.
  \]
  Then the Randers metrics $F_1:=\sqrt{g}+\beta_1$ and $F_2:=\sqrt{g}+\beta_2$ are bi-Lipschitz: 
  \[
   \frac{1-b_2}{1+b_1}\leq \frac{F_1}{F_2} \leq \frac{1+b_1}{1-b_2}.
  \]
Particulary, a Randers metric $F=\sqrt{g}+\beta$ and the associated Riemannian metric $g$  are bi-Lipschitz.

\end{example}

\begin{lemma}\cite{MT2}\label{lm:binet}
 If $F$ and $F_0$ are Finsler metrics on $M$ satisfying (\ref{eq:main}) for some constant $C$ then the Binet-Legendre metrics $g_F$ and $g_{F_0}$ associated 
 to $F$ and $F_0$ respectively satisfy 
 \[
  C^{-n}\sqrt{g_{F_0}}\leq \sqrt{g_F}\leq C^n\sqrt{g_{F_0}}. 
 \]

 \end{lemma}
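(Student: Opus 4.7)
The plan is to unwind the definition of the Binet--Legendre metric from Section~2.3 and show that a bi-Lipschitz comparison on the Finsler norms passes, via a controlled comparison on unit balls, to a comparison of the dual scalar products $g_F^*$ and $g_{F_0}^*$, which then dualizes to the claimed inequality.

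First I would translate the bi-Lipschitz estimate $C^{-1}F_0\le F\le C F_0$ into an inclusion of unit balls on each tangent space. Since $F(v)<1$ forces $C^{-1}F_0(v)<1$ and $F_0(v)<1/C$ forces $F(v)<1$, positive $1$-homogeneity immediately gives
\[
C^{-1}B_x^{F_0}\;\subseteq\;B_x^{F}\;\subseteq\;C\,B_x^{F_0},
\]
where $B_x^{F}=\{v\in T_xM:F(x,v)<1\}$ and similarly for $F_0$. Applying a fixed Lebesgue measure $\lambda$ on $T_xM$ and using that dilation by $C$ scales volumes by $C^n$, this yields
\[
C^{-n}\lambda(B_x^{F_0})\;\le\;\lambda(B_x^{F})\;\le\;C^{n}\lambda(B_x^{F_0}).
\]

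Next I would bound the integrals appearing in the definition of $g_F^*$. For any covector $\alpha\in T_x^*M$, monotonicity of the integral on the nonnegative integrand $v\mapsto\alpha(v)^2$ combined with the same set inclusions, plus the change of variables $v=C^{-1}w$ (which contributes a factor $C^{-n-2}$ from $dv$ and $\alpha(v)^2$ together) gives
\[
C^{-(n+2)}\!\!\int_{B_x^{F_0}}\!\alpha(v)^2\,d\lambda(v)\;\le\;\int_{B_x^{F}}\!\alpha(v)^2\,d\lambda(v)\;\le\;C^{\,n+2}\!\!\int_{B_x^{F_0}}\!\alpha(v)^2\,d\lambda(v).
\]
Dividing by $\lambda(B_x^F)$ and using the volume bound above then sandwiches the quadratic form $g_F^*(\alpha,\alpha)$ between constant multiples of $g_{F_0}^*(\alpha,\alpha)$, uniformly in $\alpha$ and $x$; polarization extends this to all pairs $(\alpha,\beta)$.

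Finally, dualizing the comparison of the cometrics gives the corresponding comparison of $g_F$ and $g_{F_0}$, from which one extracts the pointwise bound on $\sqrt{g_F}/\sqrt{g_{F_0}}$ claimed in the lemma. The only delicate point is bookkeeping of exponents: the crude argument above produces a polynomial factor $C^{k}$ with $k$ depending on $n$, and verifying that the optimal exponent is exactly $n$ (as stated, following \cite{MT2}) requires combining the volume bound and the integral bound so that the two contributions of $C^n$ in numerator and denominator partially cancel. This cancellation, rather than any deep geometric fact, is the main thing to check carefully; everything else is straightforward from the explicit integral definition of $g_F^*$ and positive homogeneity.
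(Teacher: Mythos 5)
The paper does not actually prove this lemma: it is imported verbatim from \cite{MT2}, so there is no internal proof to compare against. Your strategy --- translate the bi-Lipschitz bound into the inclusions $C^{-1}B_x^{F_0}\subseteq B_x^{F}\subseteq C\,B_x^{F_0}$, deduce $C^{-n}\lambda(B_x^{F_0})\le\lambda(B_x^{F})\le C^{n}\lambda(B_x^{F_0})$ and $C^{-(n+2)}\int_{B_x^{F_0}}\alpha(v)^2\,d\lambda\le\int_{B_x^{F}}\alpha(v)^2\,d\lambda\le C^{n+2}\int_{B_x^{F_0}}\alpha(v)^2\,d\lambda$, then compare the cometrics $g_F^*$, $g_{F_0}^*$ and dualize --- is the natural route, and every individual step you write down is correct.

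The gap is in the exponent, and your proposed fix does not work. Combining your two estimates in the unfavourable direction gives
\[
g_F^*(\alpha,\alpha)=\frac{n+2}{\lambda(B_x^F)}\int_{B_x^F}\alpha(v)^2\,d\lambda(v)\le\frac{C^{n+2}}{C^{-n}}\,g_{F_0}^*(\alpha,\alpha)=C^{2n+2}\,g_{F_0}^*(\alpha,\alpha),
\]
hence after dualizing $\sqrt{g_F}\le C^{\,n+1}\sqrt{g_{F_0}}$, not $C^{\,n}\sqrt{g_{F_0}}$. You suggest that the two contributions of $C^{n}$ ``partially cancel,'' but they cannot: in the worst case $B_x^F$ has simultaneously \emph{small} volume and \emph{large} second moment of $\alpha$ relative to $B_x^{F_0}$ (stretch $B_x^{F_0}$ by $C$ in the direction where $\alpha$ is large and shrink it by $C^{-1}$ transversally; both inclusions are still satisfied), so the volume factor and the moment factor reinforce each other and multiply. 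Recovering the exponent $n$ therefore requires a genuinely sharper argument than monotonicity plus scaling --- or simply citing \cite{MT2} as the paper does. Two further small remarks: the appeal to polarization is unnecessary and slightly misleading (a two-sided comparison of the quadratic forms $\alpha\mapsto g^*(\alpha,\alpha)$ is exactly what dualizes; the bilinear comparison for arbitrary pairs is false because of signs), and the discrepancy in the exponent is harmless for the only place the lemma is used (Theorem \ref{main}), where any fixed power of $C$ depending on $n$ suffices.
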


\begin{theorem}\label{main} 
 Let $F,\ F_0$ be two $C$-bi-Lipschitz Finsler metrics on a compact $n$-manifold $M$.
 
Then, for any $p>1$, there exists a constant $K(n,p,\kappa,\kappa_0) \geq 1$ depending on $p$, 
the dimension $n$ and the reversibility constants $\kappa$ and $\kappa_0$ 
 of $F$ and $F_0$ respectively such that,  
\[
 C^{-K}\leq \frac{\lambda_{1,p}(M,F)}{\lambda_{1,p}(M,F_0)}\leq C^K.
\]

\end{theorem}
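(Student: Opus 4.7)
The plan is to compare the Rayleigh quotients defining $\lambda_{1,p}(M,F)$ and $\lambda_{1,p}(M,F_0)$ directly, extracting the factor $C^{K}$ from the pointwise bi-Lipschitz bound on the two norms and on their associated volume forms. The tempting route through the Binet--Legendre metrics, passing from $F$ to $g_F$ via Proposition \ref{pp:binet}, comparing $g_F$ with $g_{F_0}$ via Lemma \ref{lm:binet}, and returning to $F_0$, produces multiplicative constants that depend on $\kappa,\kappa_0$ but do not tend to $1$ as $C\to 1$, so they cannot be absorbed into an expression of the form $C^{K}$ with $K$ independent of $C$. I would instead follow a direct path in which every step is homogeneous in $C$.

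I would first turn the pointwise inequality $C^{-1}F_0\leq F\leq CF_0$ into two companion estimates: polar duality on $T^*M$ gives $C^{-1}F_0^*\leq F^*\leq CF_0^*$, and comparing the unit tangent balls $B^{F}_xM$ and $B^{F_0}_xM$ produces $C^{-n}\,d\mathfrak{m}_{F_0}\leq d\mathfrak{m}_F\leq C^n\,d\mathfrak{m}_{F_0}$ for the Busemann--Hausdorff volumes. Using the identity $F(\nabla f)=F^*(df)$ to avoid the metric-dependent gradient vector, these integrate to
\[
 \int_M F^*(df)^p\,d\mathfrak{m}_F\leq C^{p+n}\!\int_M F_0^*(df)^p\,d\mathfrak{m}_{F_0},\qquad \int_M |f-c|^p\,d\mathfrak{m}_F\geq C^{-n}\!\int_M |f-c|^p\,d\mathfrak{m}_{F_0}
\]
for every $f\in W^{1,p}(M)$ and every $c\in\mathbb{R}$.

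What remains is the nonlinear orthogonality constraint $\int |f|^{p-2}f\,d\mathfrak{m}_F=0$ defining $\mathcal{H}^p_0$, which is measure-dependent and does not transfer under change of metric. I would bypass it via the translation-invariant characterization
\[
 \lambda_{1,p}(M,F)=\inf_{f\text{ non-constant}}\frac{\int_M F^*(df)^p\,d\mathfrak{m}_F}{\inf_{c\in\mathbb{R}}\int_M |f-c|^p\,d\mathfrak{m}_F},
\]
whose validity rests on the convexity of $c\mapsto\int |f-c|^p\,d\mathfrak{m}_F$: its unique minimizer $c^*$ satisfies $\int|f-c^*|^{p-2}(f-c^*)\,d\mathfrak{m}_F=0$, so $f-c^*\in\mathcal{H}^p_0$ is admissible for the original definition and achieves the same quotient as $f$ in the formula above. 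Passing the denominator estimate of Step~1 to the infimum in $c$ and dividing the two integrated inequalities then gives, for every non-constant $f$, a bound $R_F(f)\leq C^{p+2n}R_{F_0}(f)$, where $R_F$ denotes the quotient in the display. Taking the infimum over $f$ yields $\lambda_{1,p}(M,F)\leq C^{p+2n}\lambda_{1,p}(M,F_0)$, and the symmetric argument delivers the reverse bound; one may therefore take $K:=p+2n$, so the dependence on $\kappa,\kappa_0$ allowed in the statement is in fact not needed by this route.

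The only delicate step is the justification of this mean-free characterization for general $p>1$; once it is in place, the pointwise-to-integrated passage and the final assembly amount to bookkeeping.
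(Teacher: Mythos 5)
Your proof is correct, and it takes a genuinely different route from the paper. The paper passes through the Binet--Legendre metrics: it applies Proposition \ref{pp:binet} to $(M,F)$ and $(M,F_0)$, compares $g_F$ with $g_{F_0}$ via Lemma \ref{lm:binet} together with an elementary bi-Lipschitz comparison of Riemannian Rayleigh quotients (yielding the same exponent $p+2n$ you find, but at the Riemannian level with constant $C^n$), and then tries to absorb the resulting fixed multiplicative constant $(2n\kappa\kappa_0)^{p(n+1)}\omega_n^2k^{2n}$ into a power $C^{K'}$. Your opening objection to that route is well taken: that constant does not tend to $1$ as $C\to 1^+$, so the required $K'$ blows up and cannot depend only on $n,p,\kappa,\kappa_0$ as the statement demands; your direct argument, in which every estimate is homogeneous in $C$, avoids this entirely and yields the cleaner conclusion $K=p+2n$ with no dependence on the reversibility constants at all. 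The two nontrivial ingredients you supply --- the duality $C^{-1}F_0^*\leq F^*\leq CF_0^*$ and the comparison $C^{-n}d\mathfrak{m}_{F_0}\leq d\mathfrak{m}_F\leq C^n d\mathfrak{m}_{F_0}$ of Busemann--Hausdorff measures via inclusion of unit balls --- are both sound, as is your handling of the measure-dependent constraint defining $\mathcal{H}^p_0$ through the translation-invariant Rayleigh quotient: the first-order condition at the unique minimizer $c^*$ of the strictly convex map $c\mapsto\int_M|f-c|^p\,d\mathfrak{m}$ is exactly the constraint $\int_M|f-c^*|^{p-2}(f-c^*)\,d\mathfrak{m}=0$, and conversely any $f\in\mathcal{H}^p_0$ has $c^*=0$, so the two infima coincide. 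What your approach gives up is only what the paper's detour was designed to showcase, namely the use of the Binet--Legendre machinery that drives the rest of Section \ref{section3}; what it buys is a proof that actually establishes the theorem as stated, uniformly in $C>1$.
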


\begin{proof}
  If $g,\ g_0$ are two Riemannian metrics on $M$ such that, for all $v\in T_xM$,
  \[
   \frac{1}{a}\leq \frac{\sqrt{g(v,v)}}{\sqrt{g_0(v,v)}}\leq a,
  \]
for some constant $a>1$, then 
\begin{equation}\label{bli}
\frac{1}{a^{p+2n}}\leq \frac{\lambda_{1,p}(M,g)}{\lambda_{1,p}(M,g_0)}\leq a^{p+2n}.
\end{equation}
 
 Indeed,
 note that for such metrics, their Riemannian volume densities satisfy 
\[
a^{-n}dV_{g_0}\leq dV_g \leq a^n dV_{g_0} . 
\]
Hence, 
for all $f\in C^\infty(M)$ such that $\int_M |f|^p\ dV_g\neq 0$, we obtain
\[
 \frac{1}{a^{p+2n}}\frac{\int_M \|\nabla f\|_{g_0}^p\ dV_{g_0}}{\int_M | f|^p\ dV_{g_0}}\leq
 \frac{\int_M \|\nabla f\|_{g}^p\ dV_{g}}{\int_M | f|^p\ dV_g} \leq 
 a^{p+2n}\frac{\int_M \|\nabla f\|_{g_0}^p\ dV_{g_0}}{\int_M |f|^p\ dV_{g_0}},
\]
which provides (\ref{bli}).

Now, let denote $\kappa,\ \kappa_0$ the reversibility constants of $F$ and $F_0$ respectively. % which we assume finite. 
 Applying  Proposition 
\ref{pp:binet} to $(M,F)$ and $(M,F_0)$, we obtain
\[
 \frac{1}{(2n\kappa\kappa_0)^{p(n+1)}\omega_n^2k^{2n}}\frac{\lambda_{1,p}(M,g_F)}{\lambda_{1,p}(M,g_{F_0})}\leq 
 \frac{\lambda_{1,p}(M,F)}{\lambda_{1,p}(M,F_0)} \leq (2n\kappa\kappa_0)^{p(n+1)}\omega_n^2k^{2n} \frac{\lambda_{1,p}(M,g_F)}{\lambda_{1,p}(M,g_{F_0})}.
\]
Furthermore, from the claim and Lemma \ref{lm:binet}, we have 
\[
 \frac{1}{C^{n(p+2n)}}\leq \frac{\lambda_{1,p}(M,g_F)}{\lambda_{1,p}(M,g_{F_0})} \leq C^{n(p+2n)}.
\]
Then
\[
 \frac{1}{(2n\kappa\kappa_0)^{p(n+1)}\omega_n^2k^{2n}C^{n(p+2n)}}\leq \frac{\lambda_{1,p}(M,F)}{\lambda_{1,p}(M,F_0)} 
 \leq (2n\kappa\kappa_0)^{p(n+1)}\omega_n^2k^{2n}C^{n(p+2n)}.
\]
Since $(2n\kappa\kappa_0)^{p(n+1)}\omega_n^2k^{2n} >1$, there exist a positive constant $K'(n,p,\kappa,\kappa_0,\omega_n)$ depending on 
$n$, $p$, $\kappa$, $\kappa_0$ and $\omega_n$ such that $(2n\kappa\kappa_0)^{p(n+1)}\omega_n^2k^{2n} \leq C^{K'}$. This completes the proof.
  %\qed
\end{proof}

%%%%%%%%%%%%%%%%%%%%%%%%%%%%%%%%%%%%%%%%%%%%%%%%%%%%%%%%%%%%%section3%%%%%%%%%%%%%%%%%%%%%%%%%%%%%%%%%%%%%%%%%%%%%%%%%%%%%%%%%%%%%%%%%%%%%%%%%%%%%%%%%%%%%%%
%%%%%%%%%%%%%%%%%%%%%%%%%%%%%%%%%%%%%%%%%%%%%%%%%%%%%%%%%%%%%%%%%%%%%%%%%%%%%%%%%%%%%%%%%%%%%%%%%%%%%%%%%%%%%%%%%%%%%%%%%%%%%%%%%%%%%%%%%%%%%%%%%%%%%%%%%%%%

\section{Boundedness on conformal class}\label{section3}

Let $\mathcal{F}(M)$ be the set of Finsler metrics $F$ on a manifold $M$ with $Vol(M,F)=1$, where 
$Vol(M,F)$ denotes the volume of the Finsler manifold $(M,F)$ with respect to the Busemann-Hausdorff measure induced by $F$. 
The following holds for the first eigenvalues of the $p$-Laplacians, $p > 1$:
\[
 \inf_{F\in \mathcal{F}(M)}\lambda_{1,p}(M,F)=0.\ %and\ \sup_{F \in \mathcal{F}(M)} \lambda_{1,p}(M,F)=\infty.
\]
%That comes from the fact that the equality hods on Riemannian case. 
In the Riemannian case the eigenvalues-functional is not generally bounded.
For $p=2$, it is shown that the functional $\lambda_{1,2}$ is bounded when the dimension $n=2$ and is unbounded when $n\geq 3$, 
but $\lambda_{1,2}$ is uniformly bounded when restricted to any conformal class. Matei generalizes these results to any $p>1$ (see 
\cite{Mat05,Mat12}). Using mainly Matei's works and Proposition \ref{pp:binet}, we have the following:

\begin{theorem}\label{conf1}
 Let $(M,F)$ be a compact Finsler $n$-manifold. Then,  for any $p\in (1,n]$, there exists a constant $C:=C(n,p,\kappa_F,[F])$ depending only on the dimension $n$, $p$, 
 the reversibility constant $\kappa_F$ and the conformal class 
 $[F]$ of $F$ such that,
 \[
  \lambda_{1,p}(M,F)Vol(M,F)^{\frac{p}{n}}\leq C(n,p,\kappa_F,[F]). 
 \]

\end{theorem}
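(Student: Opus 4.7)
The plan is to pass from the Finsler metric $F$ to its Binet--Legendre Riemannian companion $g_F$, apply the known Riemannian conformal bound of Matei there, and then return to $F$ via the comparison estimates of Theorem~\ref{binet} and Proposition~\ref{pp:binet}, paying the price of a dimension- and reversibility-dependent constant in the process.

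More concretely, I would proceed in the following steps. First, for the given $F$, let $g_F$ be its Binet--Legendre metric. Proposition~\ref{pp:binet} gives
\[
\lambda_{1,p}(M,F)\;\le\;(\kappa_F\sqrt{2n})^{p(n+1)}\,\omega_n k^n\;\lambda_{1,p}(M,g_F),
\]
and Theorem~\ref{binet}(iii) gives
\[
Vol(M,F)^{p/n}\;\le\;\bigl(\omega_n k^n\bigr)^{p/n}\,Vol(M,g_F)^{p/n},
\]
with $k=k(n,\kappa_F)$. Multiplying the two and collecting the resulting constant into some $C_1(n,p,\kappa_F)$ reduces the problem to bounding the Riemannian conformal invariant $\lambda_{1,p}(M,g_F)\,Vol(M,g_F)^{p/n}$.

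Second, I would invoke Matei's result (\cite{Mat05,Mat12}): on any closed Riemannian $n$-manifold and any $p\in(1,n]$, the functional $g\mapsto \lambda_{1,p}(M,g)\,Vol(M,g)^{p/n}$ is bounded above on each Riemannian conformal class $[g]$ by a constant $C_2(n,p,[g])$. Applied to $g_F$, this yields
\[
\lambda_{1,p}(M,g_F)\,Vol(M,g_F)^{p/n}\;\le\;C_2\bigl(n,p,[g_F]\bigr),
\]
so it only remains to identify the conformal class $[g_F]$ from the Finsler conformal class $[F]$. For $\widetilde F=e^{\phi}F$ one computes directly from the definition of $g_F^*$ that $B_x M$ is replaced by $e^{-\phi(x)}B_xM$, whence $g^*_{\widetilde F}=e^{-2\phi}g^*_F$ and therefore $g_{\widetilde F}=e^{2\phi}g_F$. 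Thus $[g_F]$ is a well-defined function of $[F]$, and we may write $C_2(n,p,[F])$ in place of $C_2(n,p,[g_F])$. Combining with Step~1 produces the desired
\[
\lambda_{1,p}(M,F)\,Vol(M,F)^{p/n}\;\le\;C(n,p,\kappa_F,[F]).
\]

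The main obstacle I anticipate is not the linear-algebra-level combination of the bounds, which is routine, but the precise verification that (a) the Binet--Legendre assignment $F\mapsto g_F$ is equivariant with respect to conformal rescaling, so the Riemannian conformal class $[g_F]$ depends only on $[F]$, and (b) the constant $k$ hidden in Theorem~\ref{binet}(iii) can indeed be made to depend only on $n$ and $\kappa_F$ (and not on the individual metric within $[F]$). Point (a) is the short computation sketched above and is the conceptually essential step; point (b) is already built into the comparison $(\kappa_F\sqrt{2n})^{-n-1}\sqrt{g_F}\le F\le (\kappa_F\sqrt{2n})^{n+1}\sqrt{g_F}$ of Theorem~\ref{binet}(ii), which forces the Busemann--Hausdorff ball-volume ratio defining $k$ to be controlled by $n$ and $\kappa_F$ alone.
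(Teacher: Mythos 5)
Your proposal is correct and follows essentially the same route as the paper: pass to the Binet--Legendre metric via Proposition~\ref{pp:binet}, compare volumes via Theorem~\ref{binet}(iii), apply Matei's conformal bound to $g_F$, and use the conformal equivariance $g_{e^{\phi}F}=e^{2\phi}g_F$ to conclude that the constant depends only on $[F]$. The only cosmetic differences are that the paper first rescales $g_F$ to unit volume before invoking Matei (equivalent to your scale-invariant formulation) and cites the conformal equivariance from \cite{MT1} rather than computing it, whereas you supply the short computation explicitly.
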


Before proving this theorem, let's remark that, in the Mathei's result used (\cite{Mat12}), the dependence on the conformal class of the Riemannian metric come from 
the $n$-conformal volume of the compact Riemannian manifold $(M,g)$ which is defined as 
\[
 V_n^c(M,[g]):=\inf_{\phi \in I_n(M,[g])}\sup_{\gamma \in G_n} Vol(M,(\gamma\circ \phi)^*can),
\]
where $can$ denotes the canonical Riemannian metric on the $n$-dimensional sphere $\mathbb{S}^n$, $G_n:=\{ \gamma \in Diff(\mathbb{S}^n)|\ 
\gamma^*can\in [can]\}$ the group of conformal diffeomorphism of $(\mathbb{S}^n,can)$ and 
$I_n(M,[g]):=\{ \phi: M\rightarrow \mathbb{S}^n|\ \phi^*can\in [g])\}$ the set of conformal immersion from $(M,g)$ to $(\mathbb{S}^n,can)$. 
Using a nice property of the Binet-Legendre metric associated to the Finsler metric $F$, we can obtain a dependence on the conformal class of $F$.

\begin{proof}

 From Proposition \ref{pp:binet}, there is a constant $C_1(n,p,\kappa_F)$ depending only on $n$, $p$ and $\kappa_F$ such that 
 $\lambda_{1,p}(M,F)\leq C_1 \lambda_{1,p}(M,g_F) $. 

 Set $\alpha^{-1} := Vol(M,g_F)^{\frac{2}{n}}$ and $\tilde{g}:=\alpha g_F$. Then, we have 
 \[
 Vol(M,\tilde{g})=\alpha^{\frac{n}{2}}Vol(M,g_F)=1 
 \]
 and 
 \[
 \lambda_{1,p}(M,g_F)=\alpha^{\frac{p}{2}}\lambda_{1,p}(M,\tilde{g}). 
 \]
 Furthermore,
 Matei proved in \cite{Mat12} that there exists a constant 
 $C_2(n,p,[\tilde{g}])$\footnote{In \cite{Mat12}, $C_2=n^{\frac{p}{2}}(n+1)^{|p/2-1|}
 V_n^c(M,[\tilde{g}])$ where $V_n^c(M,[\tilde{g}])$ denote the conformal volume of $(M,\tilde{g})$ } depending on $n$, $p$ 
 and the conformal class of the metric $\tilde{g}$ which satisfy $\lambda_{1,p}(M,\tilde{g})\leq C_2$.
 
 Hence,  by Theorem \ref{binet}, we obtain 
 \[
 \lambda_{1,p}(M,F)Vol(M,F)^{\frac{p}{n}}\leq C_1C_2\left(\frac{Vol(M,F)}{Vol(M,g_F)}\right)^{\frac{p}{n}} \leq 
 C_1C_2 (\omega_n k^n)^{\frac{p}{n}}. 
 \]
 
 It is known that when $F_1$ and $F_2$ are in the same conformal class, 
 then $g_{F_1}$ and $g_{F_2}$ are also in the same conformal class. Hence, the constant $C_1C_2 (\omega_n k^n)^{\frac{p}{n}}$ depend 
 on $n,p,\kappa_F$ and the conformal class $[F]$ 
 of the metric $F$. 

\end{proof}

Particulary, for compact surface, we have the following:

\begin{theorem}
 Let $(\Sigma,F)$ be a compact Finsler surface with genus $\delta$ and reversibility constant $\kappa_F$. Then, for any $1<p\leq 2$, there exists 
 a constant $K(p,\kappa_F)$ depending only on $p$ and $\kappa_F$ such that 
 \[
  \lambda_{1,p}(\Sigma,F)Vol(\Sigma,F)^{\frac{p}{2}} \leq K(p,\kappa_F)\left(\frac{\delta + 3}{2}\right)^{\frac{p}{2}}.
 \]

\end{theorem}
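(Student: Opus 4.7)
The plan is to mimic the strategy of Theorem \ref{conf1}, now specialized to dimension $n=2$ and using a genus-dependent upper bound (a $p$-Laplacian analogue of the Yang--Yau inequality) for the Riemannian $p$-Laplacian on a closed surface. The three ingredients are (1) the Binet--Legendre comparison of Proposition \ref{pp:binet}, (2) Matei's Riemannian bound in \cite{Mat12}, which for a closed orientable surface of genus $\delta$ and $1<p\le 2$ gives a control of the form $\lambda_{1,p}(\Sigma,g)\,Vol(\Sigma,g)^{p/2}\le C(p)\bigl(\tfrac{\delta+3}{2}\bigr)^{p/2}$, and (3) the volume comparison in Theorem \ref{binet}(iii).

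First, I would apply Proposition \ref{pp:binet} to $(\Sigma,F)$ with $n=2$: this yields a constant $C_1(p,\kappa_F)$ such that
\[
\lambda_{1,p}(\Sigma,F)\ \le\ C_1(p,\kappa_F)\,\lambda_{1,p}(\Sigma,g_F),
\]
where $g_F$ is the Binet--Legendre metric associated to $F$. Since $g_F$ is a genuine smooth Riemannian metric on the closed surface $\Sigma$ (Theorem \ref{binet}(i)) and is in a fixed conformal class only determined by $F$, I may then invoke Matei's estimate for the Riemannian $p$-Laplacian on a surface of genus $\delta$, obtaining
\[
\lambda_{1,p}(\Sigma,g_F)\,Vol(\Sigma,g_F)^{p/2}\ \le\ C(p)\left(\frac{\delta+3}{2}\right)^{p/2}.
\]

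Next I would reintroduce the Finslerian volume. By Theorem \ref{binet}(iii) with $n=2$, $Vol(\Sigma,F)\le \omega_{2}k^{2}\,Vol(\Sigma,g_F)$, so that $Vol(\Sigma,F)^{p/2}/Vol(\Sigma,g_F)^{p/2}\le(\omega_{2}k^{2})^{p/2}$. Multiplying the first inequality above by $Vol(\Sigma,F)^{p/2}$ and combining with the Matei bound gives
\[
\lambda_{1,p}(\Sigma,F)\,Vol(\Sigma,F)^{p/2}\ \le\ C_1(p,\kappa_F)\,C(p)\,(\omega_{2}k^{2})^{p/2}\left(\frac{\delta+3}{2}\right)^{p/2},
\]
which is the desired inequality with $K(p,\kappa_F):=C_1(p,\kappa_F)\,C(p)\,(\omega_{2}k^{2})^{p/2}$.

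The only delicate point is the use of Matei's result in the form stated above: one must check that \cite{Mat12} indeed delivers a bound with the explicit genus factor $\bigl(\tfrac{\delta+3}{2}\bigr)^{p/2}$ for $1<p\le 2$, which is the $p$-version of the classical Yang--Yau bound $\lambda_{1,2}(\Sigma,g)\,Vol(\Sigma,g)\le 8\pi\bigl(\tfrac{\delta+3}{2}\bigr)$. Once this Riemannian input is in place, the rest is a direct Finsler-to-Riemannian translation via the Binet--Legendre metric, exactly as in the proof of Theorem \ref{conf1}; note in particular that no $n$-conformal-volume term appears because Matei's surface estimate is already expressed purely in terms of $p$ and the genus.
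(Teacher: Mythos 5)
Your proposal is correct and follows essentially the same route as the paper: Proposition \ref{pp:binet} to pass from $F$ to the Binet--Legendre metric $g_F$, Matei's genus-dependent surface bound from \cite{Mat12}, and Theorem \ref{binet}(iii) to compare the two volumes. The only cosmetic difference is that the paper first rescales $g_F$ to unit volume before invoking Matei, whereas you use the scale-invariant form $\lambda_{1,p}(\Sigma,g_F)\,Vol(\Sigma,g_F)^{p/2}\leq C(p)\left(\frac{\delta+3}{2}\right)^{p/2}$ directly; these are equivalent by the scaling behaviour of $\lambda_{1,p}$.
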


\begin{proof} 

 From the proof of Theorem \ref{conf1}, there exists a constant $A_1(p,\kappa_F)$ depending on $p$ and $\kappa_F$ such that 
 $ \lambda_{1,p}(\Sigma,F)\leq A_1(p,\kappa_F) \alpha^{\frac{p}{2}} \lambda_{1,p}(\Sigma,\tilde{g})$ where $\tilde{g}:=\alpha g_F$ and 
 $\alpha := Vol(\Sigma,g_F)^{-\frac{2}{n}}$. By a result of Matei (see \cite{Mat12}), $\lambda_{1,p}(\Sigma,\tilde{g})\leq C(p)
 \left(\frac{\delta + 3}{2}\right)^{\frac{p}{2}}$ for some constant $C$ depending only on $p$. Then, we have 
\begin{eqnarray}
 \lambda_{1,p}(\Sigma,F)Vol(\Sigma,F)^{\frac{p}{2}} &\leq& A_1C\left(\frac{Vol(\Sigma,F)}{Vol(\Sigma,g_F)}\right)^{\frac{p}{2}}
\left(\frac{\delta + 3}{2}\right)^{\frac{p}{2}}\nonumber\\
 &\leq& A_1(p,\kappa_F)C(p)(\omega_2k^2)^{\frac{p}{2}}\left(\frac{\delta + 3}{2}\right)^{\frac{p}{2}}. 
 \end{eqnarray}
 This completes the proof.

\end{proof}

 \begin{theorem}
  Let $(M,F)$ be a compact Finsler manifold of dimension $n$. Then for any $p>n$, there exists a conformal metric $\tilde{F} \in[F]$  such that the quantity 
  $\lambda_{1,p}(M,\tilde{F})Vol(M,\tilde{F})^{\frac{p}{n}}$ can be taken arbitrarily large.
 \end{theorem}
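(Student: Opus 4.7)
The plan is to transfer Matei's Riemannian unboundedness result for $p>n$ (the companion to the boundedness result used in Theorem \ref{conf1}) from the conformal class $[g_F]$ of the Binet--Legendre metric back to the conformal class $[F]$, using Proposition \ref{pp:binet}. So one should first invoke Matei's theorem in \cite{Mat12}: for $p>n$, the functional $g \mapsto \lambda_{1,p}(M,g) \, Vol(M,g)^{p/n}$ is unbounded on any Riemannian conformal class. Apply this to $[g_F]$ to obtain a sequence $\tilde g_k = \phi_k^{2} g_F \in [g_F]$ (with $\phi_k > 0$ smooth) such that $\lambda_{1,p}(M,\tilde g_k) \, Vol(M,\tilde g_k)^{p/n} \to +\infty$.

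Next, lift this sequence to the Finsler side by setting $\tilde F_k := \phi_k F$. The crucial observation is the conformal covariance of the Binet--Legendre construction: if $\tilde F = \phi F$ with $\phi>0$, then $B^{\tilde F}_x = \phi(x)^{-1} B^{F}_x$, and a change of variables in the defining integral gives $g^*_{\tilde F} = \phi^{-2} g^*_F$, hence $g_{\tilde F} = \phi^{2} g_F$. This is the "nice property" already invoked at the end of the proof of Theorem \ref{conf1}. In particular $g_{\tilde F_k} = \tilde g_k$, and $\kappa_{\tilde F_k} = \kappa_F$ since $\kappa$ is scale invariant.

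Now apply Proposition \ref{pp:binet} to each $\tilde F_k$ to get
\[
\lambda_{1,p}(M,\tilde F_k) \;\geq\; \frac{1}{(\kappa_F\sqrt{2n})^{p(n+1)}\omega_n k^n}\,\lambda_{1,p}(M,\tilde g_k),
\]
and compare volumes via Theorem \ref{binet}(iii), which yields $Vol(M,\tilde g_k) \leq Vol(M,\tilde F_k)$. Combining these,
\[
\lambda_{1,p}(M,\tilde F_k)\,Vol(M,\tilde F_k)^{p/n} \;\geq\; \frac{1}{(\kappa_F\sqrt{2n})^{p(n+1)}\omega_n k^n}\,\lambda_{1,p}(M,\tilde g_k)\,Vol(M,\tilde g_k)^{p/n},
\]
and the right-hand side tends to $+\infty$ by Matei's theorem. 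Since each $\tilde F_k = \phi_k F \in [F]$, this proves the claim.

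The only delicate point I anticipate is ensuring that the constant $k$ appearing in Theorem \ref{binet}(iii) does not degenerate along the sequence $\tilde F_k$. This is where the conformal covariance pays off a second time: both $d\mathfrak{m}_{\tilde F_k, BH}$ and $dV_{g_{\tilde F_k}}$ scale by the same factor $\phi_k^{n}$ relative to $d\mathfrak{m}_{F,BH}$ and $dV_{g_F}$, so their ratio is pointwise invariant under conformal change of $F$, and the constant $k$ can be taken to be the one associated to $F$ itself. Beyond this bookkeeping, the proof is a clean transfer argument; there is no analytic difficulty comparable to the Riemannian construction, which is entirely handled by the citation to \cite{Mat12}.
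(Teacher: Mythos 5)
Your proposal is correct and follows essentially the same route as the paper: invoke Matei's unboundedness result for $p>n$ on the conformal class $[g_F]$, transfer the conformal factor to $F$ via the covariance $g_{\varphi F}=\varphi^2 g_F$, and conclude with Proposition \ref{pp:binet} together with the volume comparison $dV_{g_F}\leq d\mathfrak{m}_F$ from Theorem \ref{binet}(iii). Your added remark on the scale invariance of $\kappa_F$ and of the constant $k$ under conformal change is a useful clarification of a point the paper leaves implicit, but the argument is the same.
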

 
\begin{proof}

 Let $K>0$. From \cite{Mat12}, there exists a metric $\tilde{g}:=\varphi^2g_F \in [g_F]$ satisfying 
 \[
 \lambda_{1,p}(M,\tilde{g})Vol(M,\tilde{g})^{\frac{p}{n}}
 > \frac{K}{C_1} ,
 \]
 for all positive constant $C_1$.  
 Consider the metric $\tilde{F}:=\varphi F \in [F]$. Then the Binet-Legendre metric associated to $\tilde{F}$ is $\tilde{g}$ (see \cite{MT1}). Hence, 
 $\lambda_{1,p}(M,\tilde{F})\geq C(n,p,\kappa_{\tilde{F}}) \lambda_{1,p}(M,\tilde{g})$ (Proposition \ref{pp:binet})
% with $C(n,p,\kappa_{\tilde{F}})=\frac{1}{(\kappa_{\tilde{F}}\sqrt{2n})^{p(n+1)}\omega_nk^n} $ 
 and $Vol(M,\tilde{F})\geq Vol(M,\tilde{g})$ (Theorem \ref{binet}) . This implies $\lambda_{1,p}(M,\tilde{F})Vol(M,\tilde{F})^{\frac{p}{n}}> K$ taking 
 $C_1=C(n,p,\kappa_{\tilde{F}})$.
 
\end{proof}

%%%%%%%%%%%%%%%%%%%%%%%%%%%%%%%%%%%%section4%%%%%%%%%%%%%%%%%%%%%%%%%%%%%%%%%%%%%%%%%%%%%%%%%%%%%%%%%%%%%%%%%%%%%%%%%%%%%%%%%%%%%%%%%%%%%%%%%%%%%%%%%%%%%
%%%%%%%%%%%%%%%%%%%%%%%%%%%%%%%%%%%%%%%%%%%%%%%%%%%%%%%%%%%%%%%%%%%%%%%%%%%%%%%%%%%%%%%%%%%%%%%%%%%%%%%%%%%%%%%%%%%%%%%%%%%%%%%%%%%%%%%%%%%%%%%%%%%%%%%%%

\section{Randers spaces}\label{section4}
Consider a Randers metric $F:=\sqrt{g}+\beta$. In local coordinates $(x^i,v^i)$ on $TM$, we write
\[
 g(v,w):=g_{ij}v^iw^j,\ \beta(v)=b_iv^i,\ v=v^i\frac{\partial}{\partial x^i},\ w=w^j\frac{\partial}{\partial x^j}.
\]
Denote $\|\beta\|_x:=\sqrt{g^{ij}(x)b_i(x)b_j(x)}$ and $\mathbf{b}=\sup_{x\in M}{\|\beta\|_x}$ where $(g^{ij})$ stands for the inverse matrix of $(g_{ij})$.

To prove theorem \ref{B}, we need the following lemmas:

\begin{lemma}\cite{Shen}\label{lem:3.1}
 For any smooth function $f$ on $M$, we have 
 \[
  F(\nabla f)= F^*(df)=\frac{\sqrt{(1-\|\beta\|^2)|df|^2+\langle \beta,df \rangle^2}-\langle \beta,df \rangle}{1-\|\beta\|^2},
 \]
where 
\[
 |df|_x:=\sqrt{g^{ij}(x)\frac{\partial f}{\partial x^i}(x)\frac{\partial f}{\partial x^j}(x)},\ and\ 
 \langle \beta,df \rangle_x:=g^{ij}(x)b_i(x)\frac{\partial f}{\partial x^j}(x).
\]

\end{lemma}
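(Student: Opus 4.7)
My plan has two parts. First, the equality $F(\nabla f) = F^*(df)$ is immediate from the construction of $\nabla f$: by the characterization of the Legendre transform $J^*$ recalled at the start of Section~2.2, $J^*(x,\alpha)$ is the unique vector $v$ with $F(x,v) = F^*(x,\alpha)$. Applying this to $\alpha = df(x)$ and using $\nabla f(x) = J^*(x, df(x))$ yields the first equality at once.

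Second, for the explicit formula I would fix a covector $\xi := df$ and compute $F^*(\xi)$ directly from its variational definition, i.e.\ maximize the linear functional $\xi(v) = \xi_i v^i$ over the unit ball $\{v : F(v) \leq 1\}$. By positive homogeneity the maximizer lies on the boundary $F(v) = r + \beta(v) = 1$, where $r := \sqrt{g_{ij}v^i v^j}$. A Lagrange multiplier calculation gives, at a critical point, $\xi_i = \lambda\bigl(g_{ij}v^j/r + b_i\bigr)$ for some $\lambda > 0$, which I would invert into
\[
v^j = \frac{r}{\lambda}\,g^{jk}(\xi_k - \lambda b_k).
\]

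Substituting this expression for $v$ into $r^2 = g_{ij}v^i v^j$ cancels the factor $r^2$ on both sides and produces the scalar quadratic
\[
(1 - \|\beta\|^2)\lambda^2 + 2\langle \beta, \xi\rangle\,\lambda - |\xi|^2 = 0,
\]
whose positive root is exactly the right-hand side of the claimed formula. Finally, evaluating $\xi(v)$ itself at the critical point gives $\xi_i v^i = \lambda(g_{ij}v^j v^i / r + b_i v^i) = \lambda(r + \beta(v)) = \lambda$, so $F^*(\xi) = \lambda$ as required.

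The only delicate step I anticipate is the sign choice for the quadratic: the hypothesis $\mathbf{b} < 1$ guarantees $1 - \|\beta\|^2 > 0$, and then the positive root is forced both by the nonnegativity of $F^*$ and by the fact that the critical point must genuinely maximize (rather than minimize) $\xi(v)$ on the compact level set $\{F(v) = 1\}$. Once the sign is fixed, the remaining algebra is entirely mechanical, so I do not expect any serious obstacle.
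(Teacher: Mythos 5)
Your argument is correct, and it is worth noting that the paper itself offers no proof of this lemma at all: it is simply quoted from Shen's \emph{Lectures on Finsler geometry}. Your two-step derivation is therefore a self-contained substitute rather than a variant of anything in the text. The first step is exactly right: the identity $F(\nabla f)=F^*(df)$ is immediate from the characterization of the Legendre transform $J^*$ recalled in Section~2.2, namely that $J^*(x,\alpha)$ is the unique $v$ with $F(x,v)=F^*(x,\alpha)$. For the second step, your Lagrange computation checks out: from $\xi_i=\lambda\bigl(g_{ij}v^j/r+b_i\bigr)$ one gets $v^j=(r/\lambda)g^{jk}(\xi_k-\lambda b_k)$, substitution into $r^2=g_{ij}v^iv^j$ yields $\lambda^2=|\xi|^2-2\lambda\langle\beta,\xi\rangle+\lambda^2\|\beta\|^2$, i.e.\ the quadratic you state, and Euler's homogeneity relation gives $\xi(v)=\lambda F(v)=\lambda$ at the critical point, so the multiplier is the value of $F^*(\xi)$. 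Since the product of the two roots is $-|\xi|^2/(1-\|\beta\|^2)<0$, exactly one root is positive, and it is the maximum (the negative root being the minimum of $\xi$ on $\{F=1\}$), which settles the sign question you flag. The only housekeeping items you leave implicit are the trivial case $df=0$ (both sides vanish) and the fact that the level set $\{F=1\}$ is a smooth compact hypersurface with nonvanishing differential (again by homogeneity), so the Lagrange method applies; neither is a genuine gap.
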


\begin{lemma}\cite{YZ13}
 The reversibility constant and the $2$-uniform concavity constant of the Randers space $(M,F:=\sqrt{g}+\beta)$ are given by
 \[
  \sigma_F =\left( \frac{1+\mathbf{b}}{1-\mathbf{b}}\right)^2=\kappa_F^2.
 \]

\end{lemma}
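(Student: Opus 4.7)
The plan is to handle $\kappa_F$ directly and then derive $\sigma_F$ from it by combining an explicit extremal pair with a general upper bound.

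For the reversibility constant, since only $\beta$ is odd in $v$, one has $F(\pm v) = \alpha(v) \pm \beta(v)$ where $\alpha(v) := \sqrt{g(v,v)}$. Hence
\[
\frac{F(v)}{F(-v)} = \frac{1+t}{1-t},\qquad t := \frac{\beta(v)}{\alpha(v)}.
\]
By Cauchy--Schwarz applied to $g$, $|t|\leq \|\beta\|_x$, with equality attained when $v$ is the $g$-dual of $\beta_x$. Since $t \mapsto (1+t)/(1-t)$ is strictly increasing on $(-1,1)$, the inner supremum equals $(1+\|\beta\|_x)/(1-\|\beta\|_x)$, and taking the sup over $x\in M$ yields $\kappa_F = (1+\mathbf{b})/(1-\mathbf{b})$.

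For the $2$-uniform concavity constant, the lower bound $\sigma_F \geq \kappa_F^2$ is exhibited by an extremal pair: at $x$ with $\|\beta\|_x = \mathbf{b}$, let $e$ be the $g$-unit vector with $\beta(e) = \mathbf{b}$, and set $v = e$, $w = -e$. Then $g_v(w,w) = g_e(e,e) = F(e)^2 = (1+\mathbf{b})^2$ while $F(w)^2 = (1-\mathbf{b})^2$, so the ratio is exactly $\kappa_F^2$. For the matching upper bound $\sigma_F \leq \kappa_F^2$, I would plug into the classical explicit formula for the Randers fundamental tensor
\[
g_v(w,w) = \frac{F(v)}{\alpha(v)}\left(g(w,w) - \frac{g(v,w)^2}{\alpha(v)^2}\right) + \left(\frac{g(v,w)}{\alpha(v)} + \beta(w)\right)^2,
\]
reduce the optimization to the two-plane $\Pi$ spanned by $v$ and $w$ (since $g_v$ depends only on the direction of $v$), and carry out an elementary calculus extremization in $g$-orthonormal coordinates on $\Pi$, where only three real parameters remain: the direction of $v$, the angle of $w$, and the $g$-length of $\beta|_\Pi$.

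The main obstacle is this last step: naive term-by-term estimates of the displayed expression (using, e.g., $F\geq (1-\mathbf{b})\alpha$ and Cauchy--Schwarz) produce a constant strictly larger than $\kappa_F^2$, so one must match the tight configuration identified in the lower-bound calculation. A cleaner route is to exploit the dual characterization $\sigma_F = \sup F^*(\eta)^2/g^*_\xi(\eta,\eta)$ together with the fact that the polar dual of a Randers metric is again Randers with the same reversibility constant; this reduces the upper bound for $F$ to the lower-bound argument applied to $F^*$, and both estimates are then controlled by the same quantity $(1+\mathbf{b})/(1-\mathbf{b})$.
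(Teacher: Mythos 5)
The paper offers no proof of this lemma at all: it is quoted from \cite{YZ13}, so there is nothing internal to compare against and your attempt must stand on its own. The parts you actually carry out are correct. The computation of $\kappa_F$ via Cauchy--Schwarz and the monotonicity of $t\mapsto(1+t)/(1-t)$ is fine, and the extremal pair $v=e$, $w=-e$ does give $g_v(w,w)/F(w)^2=F(v)^2/F(-v)^2=\kappa_F^2$ (note this half is not special to Randers metrics: taking $w=-v$ shows $\sigma_F\geq\kappa_F^2$ for \emph{every} Finsler metric, since $g_v(-v,-v)=g_v(v,v)=F(v)^2$). The quoted formula for the Randers fundamental tensor is also correct.

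The genuine gap is the upper bound $\sigma_F\leq\kappa_F^2$, which is the only nontrivial content of the lemma and which you describe but do not prove. Your ``cleaner route'' via duality does not work as stated: proving $\sigma_F\leq\kappa_F^2$ through the identity $\sigma_F=\sup F^*(\eta)^2/g^*_\xi(\eta,\eta)$ requires the \emph{universal} lower bound $g^*_\xi(\eta,\eta)\geq\kappa_F^{-2}F^*(\eta)^2$ for all $\xi,\eta$, i.e.\ a $2$-uniform convexity estimate for the dual Randers norm; your ``lower-bound argument'' only exhibits one extremal pair and yields an existence statement, not a bound valid for all pairs, so nothing has been reduced. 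The direct route, on the other hand, does close, and more easily than you fear: normalizing $\alpha(v)=\alpha(w)=1$ and writing $c=g(v,w)$, $m=\beta(w)$, the crude bound $F(v)\leq 1+\mathbf{b}$ gives
\[
g_v(w,w)\leq(1+\mathbf{b})(1-c^2)+(c+m)^2,
\]
whose maximum over $c\in[-1,1]$ is attained at $c=m/\mathbf{b}$ and equals $(1+\mathbf{b})(\mathbf{b}+m^2)/\mathbf{b}$; the required inequality $(1-\mathbf{b})^2(\mathbf{b}+m^2)\leq\mathbf{b}(1+\mathbf{b})(1+m)^2$ is then a single-variable quadratic inequality in $m\in[-\mathbf{b},\mathbf{b}]$ with a root at $m=-\mathbf{b}$, which one checks holds on the whole interval. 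Without this (or an equivalent) completion, the claimed equality $\sigma_F=\kappa_F^2$ is not established.
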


The first eigenvalue of $(M,F)$ and $(M,g)$ can be controlled by the reversibility constant as the next proposition showing. 
Note that a similar result is obtained in \cite{Mun} using Bao-Lackey Laplacian.

\begin{proposition}\label{pp:rand}
 Let $(M,F:=\sqrt{g}+\beta,d\mathfrak{m}_{HT})$ be a Randers space, where $d\mathfrak{m}_{HT}$ is the Holmes-Thompson measure. Then we have
 \[
  \frac{1}{\kappa_F^p}\lambda_{1,p}(M,g)\leq \lambda_{1,p}(M,F) \leq \kappa_F^p \lambda_{1,p}(M,g),
 \]
where $\lambda_{1,p}(M,g)$ is the first eigenvalue of the Riemannian manifold $(M,g)$.
\end{proposition}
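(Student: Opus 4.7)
The plan is to exploit two simple but crucial features of a Randers metric $F=\sqrt{g}+\beta$. First, a classical computation on the indicatrix shows that for a Randers metric the Holmes--Thompson volume form coincides with the Riemannian volume form of $g$, namely $d\mathfrak{m}_{HT}=dV_g$. In particular the normalization constraint $\int_M|f|^{p-2}f\,d\mathfrak{m}=0$ defining $\mathcal{H}^p_0$ is identical for $(M,F,d\mathfrak{m}_{HT})$ and for $(M,g,dV_g)$, so the same class of test functions is admissible on both sides of the desired inequality. This is precisely what lets us recover the clean multiplicative constant $\kappa_F^p$, rather than the bloated constants produced by the Binet--Legendre argument of Theorem \ref{main}.

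The main computation is a sharp pointwise bound on $F(\nabla f)$ by $|df|_g$. Starting from Lemma \ref{lem:3.1} and writing $a=|df|_x$, $c=\langle\beta,df\rangle_x$, $b=\|\beta\|_x$, I would observe that the function
\[
c\mapsto \frac{\sqrt{(1-b^2)a^2+c^2}-c}{1-b^2}
\]
is strictly decreasing in $c$ on the interval $[-ab,ab]$ permitted by Cauchy--Schwarz. Evaluating at the endpoints $c=\pm ab$ yields
\[
\frac{|df|_g}{1+\|\beta\|_x}\leq F(\nabla f)\leq \frac{|df|_g}{1-\|\beta\|_x}.
\]
Using $\|\beta\|_x\leq\mathbf{b}$, raising to the $p$-th power, and integrating against $d\mathfrak{m}_{HT}=dV_g$ gives, for every $f\in W^{1,p}(M)\setminus\{0\}$,
\[
\frac{1}{(1+\mathbf{b})^p}\int_M|df|_g^p\,dV_g\leq \int_M F(\nabla f)^p\,d\mathfrak{m}_{HT}\leq\frac{1}{(1-\mathbf{b})^p}\int_M|df|_g^p\,dV_g.
\]

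To conclude, I would plug this into the variational characterization of $\lambda_{1,p}$. A minimizer $f_g$ of the Riemannian Rayleigh quotient is admissible on the Finslerian side because the constraint and the measure coincide, and it gives $\lambda_{1,p}(M,F)\leq(1-\mathbf{b})^{-p}\lambda_{1,p}(M,g)$; dually, using a Finslerian minimizer $f_F$ yields $(1+\mathbf{b})^{-p}\lambda_{1,p}(M,g)\leq\lambda_{1,p}(M,F)$. Since $\kappa_F=(1+\mathbf{b})/(1-\mathbf{b})$ and $\mathbf{b}<1$, both $(1-\mathbf{b})^{-p}$ and $(1+\mathbf{b})^p$ are dominated by $\kappa_F^p$, so the symmetric statement of the proposition follows.

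The only genuinely delicate point is establishing the identity $d\mathfrak{m}_{HT}=dV_g$: without it, distinct reference measures on the two sides would reintroduce a multiplicative distortion and break the symmetry of the final bound. Once this identity is recorded (and existence of Finslerian minimizers, or minimizing sequences, is invoked in the usual way), everything else reduces to the brief monotonicity computation above.
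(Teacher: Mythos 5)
Your proposal is correct and follows essentially the same route as the paper: the identity $d\mathfrak{m}_{HT}=dV_g$ for Randers metrics, the pointwise comparison of $F^*(df)$ with $|df|_g$ via Lemma \ref{lem:3.1}, and the conclusion through the Rayleigh quotient. Your monotonicity argument in $c=\langle\beta,df\rangle$ actually yields the slightly sharper pointwise bounds $\frac{|df|}{1+\|\beta\|_x}\leq F^*(df)\leq\frac{|df|}{1-\|\beta\|_x}$, whereas the paper uses a cruder triangle-inequality estimate, but both are then relaxed to the same constant $\kappa_F^{\pm 1}$.
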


\begin{proof}

 Since $d\mathfrak{m}_{HT}$ denotes the Holmes-Thompson measure then it coincides with the Riemannian measure $dV_g$ induced by $g$. Recall that 
 the first eigenvalue on the Riemannian space $(M,g)$ is defined by
 \[
  \lambda_{1,p}(M,g):=\inf_{f\in \mathcal{H}_0^{p}}\frac{\int_M |df|^p\ dV_g}{\int_M |f|^p\ dV_g}.
 \]

 Furthermore, from lemma \ref{lem:3.1}, we have 
 \[
  \frac{1}{\kappa_F} |df| \leq F^*(df) \leq  \kappa_F |df|.
 \]
Indeed, for all  $x\in M$, 

$1-\mathbf{b}\leq 1-\mathbf{b}^2 \leq 1-\|\beta\|_x^2 \leq 1+\mathbf{b}^2 \leq 1+\mathbf{b}$ and  
\begin{eqnarray*}
 \sqrt{(1-\|\beta\|^2)|df|^2+\langle \beta,df \rangle^2}-\langle \beta,df \rangle &\leq& |df| + 2 |\langle \beta,df \rangle|\\
                                                                                  &\leq& (1+2\mathbf{b})|df|.
\end{eqnarray*}
Then
\[
 F^*(df)\leq \frac{1+2\mathbf{b}}{1-\mathbf{b}^2} |df|\leq \kappa_F |df|.
\]

Also, we have $F^*(df)\geq (1-\mathbf{b})|df| \geq \kappa_F |df|$.

\end{proof}

As a direct consequence, we have

\begin{corollary}
 Let $(M,g)$ be a Riemannian manifold of dimension $n$ and $(\beta_k)_k$ be a sequence of $1$-forms, with $\|\beta_k\|<1$ for all $k$, 
 converging to the null $1$-form in $\Lambda^1(M)$. Consider the corres\-ponding sequence of Finsler metrics 
 $(F_k)_k$ with $F_k:=\sqrt{g}+\beta_k$. \\
 Then the real sequence of first eigenvalues $\mu_k = \lambda_{1,p}(M,F_k)$ converges to $\mu = \lambda_{1,p}(M,g)$.
\end{corollary}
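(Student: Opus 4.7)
The plan is to derive the result as a direct consequence of Proposition \ref{pp:rand} combined with the explicit expression for the reversibility constant given in the preceding lemma. Applying Proposition \ref{pp:rand} to each $F_k$ gives the two-sided estimate
\[
 \frac{1}{\kappa_{F_k}^p}\lambda_{1,p}(M,g)\leq \mu_k \leq \kappa_{F_k}^p\lambda_{1,p}(M,g),
\]
so the whole problem reduces to showing that $\kappa_{F_k}\to 1$ as $k\to\infty$. The conclusion will then follow immediately from the squeeze theorem, since the lower and upper bounds both converge to $\lambda_{1,p}(M,g)=\mu$.

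For the convergence of $\kappa_{F_k}$, I would invoke the second lemma of Section~\ref{section4}, which identifies
\[
\kappa_{F_k}=\frac{1+\mathbf{b}_k}{1-\mathbf{b}_k},\qquad \mathbf{b}_k:=\sup_{x\in M}\|(\beta_k)_x\|_g.
\]
Since this is a continuous function of $\mathbf{b}_k$ equal to $1$ at $\mathbf{b}_k=0$, it suffices to establish $\mathbf{b}_k\to 0$. This is precisely the content of the hypothesis that $\beta_k\to 0$ in $\Lambda^1(M)$, once one notes that convergence in the space of $1$-forms (on the compact manifold $M$ implicit in the eigenvalue setting) is understood with respect to a topology at least as strong as uniform convergence of the $g$-norm, so that $\sup_{x\in M}\|(\beta_k)_x\|_g\to 0$.

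The potential obstacle is only notational: one must be sure that the convergence $\beta_k\to 0$ in $\Lambda^1(M)$ really does deliver $\mathbf{b}_k\to 0$. On a compact manifold this is automatic for any of the usual smooth topologies (e.g.\ the $C^0$ topology relative to $g$, or any stronger $C^k$ or Fréchet topology), so no additional work is required. Putting everything together, $\kappa_{F_k}^p\to 1$, and hence $\mu_k\to\mu$, which is the claimed convergence.
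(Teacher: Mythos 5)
Your proposal is correct and follows essentially the same route as the paper: apply Proposition \ref{pp:rand} to each $F_k$, identify $\kappa_{F_k}=\frac{1+\mathbf{b}_k}{1-\mathbf{b}_k}$, note that $\mathbf{b}_k\to 0$, and conclude by squeezing. The only difference is cosmetic — you correctly keep the exponent $p$ on $\kappa_{F_k}$, which the paper's own displayed bound drops (an inconsequential typo), and you are slightly more careful about why convergence in $\Lambda^1(M)$ forces $\sup_x\|(\beta_k)_x\|_g\to 0$.
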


\begin{proof}

For all $k$, we have 
\[
 \frac{1-\mathbf{b}_k}{1+\mathbf{b}_k}\leq \frac{\lambda_{1,p}(M,F_k)}{\lambda_{1,p}(M,g)} \leq \frac{1+\mathbf{b}_k}{1-\mathbf{b}_k}
\]
Since $\beta_k \longrightarrow 0$ then $\mathbf{b}_k \longrightarrow 0$. Hence
\[
 \lim_{k\rightarrow \infty} \frac{\lambda_{1,p}(M,F_k)}{\lambda_{1,p}(M,g)} = 1.
\]

\end{proof}

\begin{corollary}
 Let $(M,F:=\sqrt{g}+\beta)$ be a compact Randers manifold. For any $p,q \in \mathbb{R}$ such that $1<p\leq q$, the positive eigenvalues 
 $\lambda_{1,p}(M,F)$ and $\lambda_{1,p}(M,F)$ satisfy 
 \[
  \frac{p\sqrt[p]{\lambda_{1,p}(M,F)}}{q\sqrt[q]{\lambda_{1,q}(M,F)}} \leq \sigma_F.
 \]

\end{corollary}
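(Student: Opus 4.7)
My plan is to reduce the Finsler inequality to a Riemannian one by invoking Proposition~\ref{pp:rand} on both the numerator and the denominator, and then conclude via a known monotonicity for Riemannian $p$-eigenvalues. By Proposition~\ref{pp:rand}, applied with exponent $p$ in its upper form and with exponent $q$ in its lower form, I obtain after taking $p$-th and $q$-th roots
\[
 \lambda_{1,p}(M,F)^{1/p} \leq \kappa_F\,\lambda_{1,p}(M,g)^{1/p}, \qquad \lambda_{1,q}(M,F)^{1/q} \geq \kappa_F^{-1}\,\lambda_{1,q}(M,g)^{1/q}.
\]
Taking the ratio and multiplying by $p/q$ yields
\[
 \frac{p\sqrt[p]{\lambda_{1,p}(M,F)}}{q\sqrt[q]{\lambda_{1,q}(M,F)}} \;\leq\; \kappa_F^{2}\cdot \frac{p\sqrt[p]{\lambda_{1,p}(M,g)}}{q\sqrt[q]{\lambda_{1,q}(M,g)}}.
\]

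Since $\sigma_F = \kappa_F^{2}$ by the Yin--Zhao lemma immediately preceding Proposition~\ref{pp:rand}, the corollary reduces to the purely Riemannian monotonicity statement
\[
 p\,\lambda_{1,p}(M,g)^{1/p} \;\leq\; q\,\lambda_{1,q}(M,g)^{1/q}, \qquad 1 < p \leq q,
\]
which is a classical fact for the $p$-Laplacian on a compact Riemannian manifold. I would prove it by taking a first eigenfunction $u$ of the Riemannian $q$-Laplacian (so that $\int |u|^{q-2}u\,dV_g=0$ and $\int |\nabla u|^q\,dV_g = \lambda_{1,q}(M,g)\int |u|^q\,dV_g$), shifting it by the unique constant $c$ for which $f:=u-c$ satisfies the normalization $\int|f|^{p-2}f\,dV_g = 0$ required by $\mathcal{H}^p_0$, and then testing $f$ in the variational definition of $\lambda_{1,p}(M,g)$. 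Hölder's inequality applied separately to $\int |\nabla f|^p\,dV_g$ and $\int |f|^p\,dV_g$ (with exponents $q/p$ and $q/(q-p)$) then converts the $p$-quotient into a $q$-quotient up to a combinatorial factor, and a careful bookkeeping of the exponents produces precisely the $p/q$ in front.

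The main obstacle I foresee is this last bookkeeping: the constraint $\int |\cdot|^{p-2}(\cdot)\,dV_g=0$ depends on $p$, so the $q$-eigenfunction cannot be dropped directly into the $p$-Rayleigh quotient, and the constant shift $c$ has to be controlled so that both $\|f\|_p\sim\|u\|_q$ and $\|\nabla f\|_p\sim\|\nabla u\|_q$ hold with the right constants. Once that Riemannian monotonicity is in hand (it can equivalently be invoked from the Matei references already cited), chaining it with the Finsler-to-Riemannian reduction above immediately gives the bound by $\sigma_F$ claimed in the corollary.
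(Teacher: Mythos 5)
Your proposal follows essentially the same route as the paper: reduce to the Riemannian ratio via Proposition~\ref{pp:rand} (picking up the factor $\kappa_F^2=\sigma_F$), then invoke the monotonicity of $t\mapsto t\sqrt[t]{\lambda_{1,t}(M,g)}$ on $(1,\infty)$. The paper simply cites Matei \cite{Mat05} for that monotonicity rather than re-deriving it, so the bookkeeping you worry about is exactly what is outsourced to the reference; your argument is correct as it stands.
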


\begin{proof}

Let $1<p<q$. By Proposition \ref{pp:rand}, we obtain
\[
  \frac{p\sqrt[p]{\lambda_{1,p}(M,F)}}{q\sqrt[q]{\lambda_{1,q}(M,F)}} \leq \kappa_F^2 \frac{p\sqrt[p]{\lambda_{1,p}(M,g)}}{q\sqrt[q]{\lambda_{1,q}(M,g)}}.
\]
However, the map $t \mapsto t\sqrt[t]{\lambda_{1,t}(M,g)}$ is strictly increasing on $(1,\infty)$ (see \cite{Mat05}). Then,
 \[
  \frac{p\sqrt[p]{\lambda_{1,p}(M,F)}}{q\sqrt[q]{\lambda_{1,q}(M,F)}} \leq \kappa_F^2= \sigma_F.
 \]

\end{proof}

%%%%%%%%%%%%%%%%%%%%%%%%%%%%%%%%%%%%%%%%%%%%%%%%%%%%%section5%%%%%%%%%%%%%%%%%%%%%%%%%%%%%%%%%%%%%%%%%%%%%%%%%%%%%%%%%%%%%%%%%%%%%%%%%%%%%%%%%%%%%%%%%ù%%%
%%%%%%%%%%%%%%%%%%%%%%%%%%%%%%%%%%%%%%%%%%%%%%%%%%%%%%%%%%%%%%%%%%%%%%%%%%%%%%%%%%%%%%%%%%%%%%%%%%%%%%%%%%%%%%%%%%%%%%%%%%%%%%%%%%%%%%%%%%%%%%%%%%%%%%%%%%
\section{Cheeger-type inequality}
\begin{definition}%\cite{YZ13}
Let $(M,F,d\mathfrak{m})$ be a closed $n$-dimensional Finsler manifold. The
 Cheeger's constant is defined by
 \begin{equation}
  \mathbf{h}(M):=\inf_{\Gamma}\frac{\min\{ A_{\pm}(\Gamma)\}}{\min\{ \mathfrak{m}(D_1),\mathfrak{m}(D_2)\}},
 \end{equation}
where $\Gamma$ varies over $(n-1)$-dimensional submanifolds of $M$ which divide $M$ into disjoint open submanifolds $D_1$, $D_2$ of $M$ 
with common boundary $\partial D_1=\partial D_2=\Gamma$. One denotes $A_{\pm}(\Gamma)$ the areas of $\Gamma$ induced by the outward and inward normal 
vector field $\mathbf{n}_{\pm}$.
\end{definition}

We have the following usefull co-area formula:

\begin{lemma}\cite{YZ13}\label{area}
 Let $(M,F,\mathfrak{m})$ be a Finsler measure space. Let $\phi$ be a piecewise $C^1$ function on $M$ such that $\phi^{-1}(\{t\})$ is compact for all 
 $t \in \mathbb{R}$.
 Then for any continuous function $f$ on $M$, we have
 \[
  \int_M f F(\nabla\phi)\ d\mathfrak{m} = \int_{-\infty}^{\infty} \left( \int_{\phi^{-1}(t)}f\ dA_\mathbf{n} \right)\ dt,
 \]
where $\mathbf{n}:=\nabla\phi/F(\nabla\phi)$.
\end{lemma}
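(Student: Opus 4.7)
The plan is to reduce the identity to a local flow-box computation in coordinates adapted to the level sets of $\phi$, apply Fubini, and argue that the critical set of $\phi$ contributes nothing to either side. First I would set $\mathcal{C}:=\{x\in M:d\phi(x)=0\}$. On $\mathcal{C}$ one has $F(\nabla\phi)=F^*(d\phi)=0$, so the left-hand side is unchanged by replacing $M$ with $M\setminus\mathcal{C}$. By Sard's theorem the set $\phi(\mathcal{C})$ of critical values is Lebesgue-null in $\mathbb{R}$, so the right-hand side is also unchanged provided we interpret the inner integral as zero at critical $t$. For every regular value $t$, compactness of $\phi^{-1}(t)$ together with the implicit function theorem guarantees that $\phi^{-1}(t)$ is a compact smooth embedded $(n-1)$-submanifold on which $\mathbf{n}:=\nabla\phi/F(\nabla\phi)$ is a well-defined unit Finslerian normal.

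Next, around every point of $M\setminus\mathcal{C}$ the implicit function theorem provides coordinates $(y^1,\dots,y^{n-1},t)$ in which $\phi(y,t)=t$, and I would pick a locally finite partition of unity $(\chi_\alpha)$ subordinate to such charts. By linearity in $f$ it suffices to prove the identity after replacing $f$ by each $\chi_\alpha f$, hence to work in a single flow-box chart $U$. Writing $d\mathfrak{m}=e^{\Phi(y,t)}\,dy^1\cdots dy^{n-1}\,dt$, the core claim is the local identity
\[
F(\nabla\phi)\,d\mathfrak{m} \;=\; dA_{\mathbf{n}}\wedge dt
\]
on $U$, from which Fubini yields the desired formula on $U$. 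Summing over the partition of unity and discarding the critical null set then produces the global statement.

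The main obstacle is establishing the local identity above, which is a purely Finsler-geometric assertion about the area form induced on a hypersurface by a chosen unit normal. Following the convention of \cite{YZ13}, an infinitesimal slab $\{t_0\le t\le t_0+dt\}$ has $\mathfrak{m}$-volume $e^{\Phi}\,dy\,dt$ and Finsler thickness $ds=dt/F(\nabla\phi)$ in the direction $\mathbf{n}$, because the Legendre transform gives $d\phi(\mathbf{n})=d\phi(\nabla\phi)/F(\nabla\phi)=F^*(d\phi)^2/F(\nabla\phi)=F(\nabla\phi)$. Matching the two expressions $e^{\Phi}\,dy\,dt = dA_{\mathbf{n}}\cdot ds$ then forces $dA_{\mathbf{n}}=F(\nabla\phi)\,e^{\Phi}\,dy^1\cdots dy^{n-1}$ on each slice, which is exactly the displayed identity. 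Some care is required because $F$ is not reversible and $dA_{\mathbf{n}}$ genuinely depends on the chosen side; once the convention aligned with \cite{YZ13} is fixed the remainder is routine Sard plus Fubini.
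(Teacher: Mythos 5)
The paper itself offers no proof of this lemma --- it is quoted verbatim from \cite{YZ13} as a known result --- so there is no internal argument to compare yours against. Your overall strategy (localize in flow-box coordinates via a partition of unity, identify $F(\nabla\phi)\,d\mathfrak{m}$ with $dA_{\mathbf{n}}\wedge dt$ on each chart, apply Fubini, and show the critical set contributes nothing) is the standard route, essentially the one followed in Shen's \emph{Lectures on Finsler geometry} and in \cite{YZ13}, and your computation $d\phi(\mathbf{n})=F^*(d\phi)=F(\nabla\phi)$ via the characterization of the Legendre transform is correct.

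Two steps, however, need repair. First, the appeal to Sard's theorem is invalid at the stated regularity: for a map from an $n$-manifold to $\mathbb{R}$, Sard requires $C^{n}$ smoothness, and Whitney's classical example exhibits a $C^1$ function on $\mathbb{R}^2$ whose set of critical values contains an interval. For a merely piecewise $C^1$ function $\phi$ you therefore cannot conclude that $\phi(\mathcal{C})$ is Lebesgue-null. The correct way to dispose of the critical set is the Lipschitz (Federer) coarea formula in a chart: $\int_{\mathbb{R}}\mathcal{H}^{n-1}\bigl(\phi^{-1}(t)\cap\mathcal{C}\bigr)\,dt=\int_{\mathcal{C}}|\nabla\phi|\,dx=0$, so for almost every $t$ the critical part of the level set is $\mathcal{H}^{n-1}$-null, hence $dA_{\mathbf{n}}$-null (the two area measures being mutually absolutely continuous on compact sets by the bi-Lipschitz comparison of $F$ with a Riemannian metric). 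Second, your key local identity $dA_{\mathbf{n}}=F(\nabla\phi)\,e^{\Phi}\,dy^1\cdots dy^{n-1}$ is justified only by an infinitesimal ``slab thickness'' heuristic; a proof must start from the actual definition of $dA_{\mathbf{n}}$, which in \cite{YZ13} is the restriction to the level hypersurface of the contraction $\iota_{\mathbf{n}}\,d\mathfrak{m}$. With that definition the identity is a one-line computation --- the $\partial/\partial t$ component of $\mathbf{n}$ in your adapted coordinates is exactly $d\phi(\mathbf{n})=F(\nabla\phi)$ --- and the heuristic can be discarded. With these two repairs (and a word about gluing across the seams of the piecewise $C^1$ structure) the argument is complete.
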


Lemma \ref{area} yields the following :

\begin{lemma}
 Given a positive function $f\in C^1(M)$. Then, we have
\[
 \int_M F(\nabla f)\ d\mathfrak{m} \geq \mathbf{h}(M) \int_M f\ d\mathfrak{m}.
\]
\end{lemma}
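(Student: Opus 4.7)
The plan is to apply the co-area formula of Lemma \ref{area} with $\phi = f$ and with the continuous factor taken to be the constant function $1$. This yields
\[
\int_M F(\nabla f)\, d\mathfrak{m} \;=\; \int_{-\infty}^{\infty} A_\mathbf{n}(f^{-1}(t))\, dt \;=\; \int_0^{\infty} A_\mathbf{n}(f^{-1}(t))\, dt,
\]
where the second equality uses the positivity of $f$, so that $f^{-1}(t) = \emptyset$ for $t < 0$. On the other side of the inequality, I would combine this with the layer-cake representation
\[
\int_M f\, d\mathfrak{m} = \int_0^{\infty} \mathfrak{m}(\{f > t\})\, dt,
\]
so that the global estimate reduces to a pointwise (in $t$) estimate on the area of level sets versus the volume of super-level sets.

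Next I would translate each level $t$ into the setting of the Cheeger constant. By Sard's theorem applied to the $C^1$ function $f$, the set of critical values has measure zero, so for almost every $t > 0$ the level set $\Gamma_t := f^{-1}(t)$ is a smooth $(n-1)$-dimensional submanifold separating $M$ into the two open pieces $D_1(t) = \{f > t\}$ and $D_2(t) = \{f < t\}$ with common boundary $\Gamma_t$. The definition of $\mathbf{h}(M)$ then yields
\[
A_\mathbf{n}(\Gamma_t) \;\geq\; \mathbf{h}(M)\,\min\{\mathfrak{m}(D_1(t)),\,\mathfrak{m}(D_2(t))\}.
\]
Under the hypothesis that $\mathfrak{m}(\{f>t\}) \leq \mathfrak{m}(\{f<t\})$ for a.e.\ $t$ (which is the relevant situation in which the inequality is applied, e.g.\ when $f$ is supported in a region of measure at most $\mathfrak{m}(M)/2$), this simplifies to $A_\mathbf{n}(\Gamma_t) \geq \mathbf{h}(M)\,\mathfrak{m}(\{f>t\})$, and integrating in $t$ delivers the claim.

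The main obstacle is exactly the $\min$ in the definition of $\mathbf{h}(M)$: without a restriction on $f$, there is no reason for the super-level sets to be the smaller half, and indeed taking $f$ to be a strictly positive constant shows that the stated inequality cannot hold for \emph{every} positive $f$ on a closed manifold. So the genuine content of the proof is the reduction of the Cheeger constant to a one-sided control of $A_\mathbf{n}(\Gamma_t)$ by $\mathfrak{m}(\{f>t\})$, which forces one to work within the class of functions whose super-level sets have volume at most half of $\mathfrak{m}(M)$ (and this is precisely how the lemma is used later to derive the Cheeger-type inequality for $\lambda_{1,p}$). The remaining technicalities are routine: justifying Sard for $C^1$ data, checking that the set of critical $t$ does not contribute to either integral, and invoking Fubini to integrate the pointwise estimate.
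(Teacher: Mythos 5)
Your proposal follows exactly the paper's route: the coarea formula of Lemma \ref{area} applied to $\phi=f$ with integrand $1$, the layer-cake identity $\int_M f\,d\mathfrak{m}=\int_0^\infty \mathfrak{m}(\{f\geq t\})\,dt$, and the definition of $\mathbf{h}(M)$ applied to the (regular) level sets $\{f=t\}$. The only difference is that you make explicit a step the paper passes over in silence: the paper's proof asserts
\[
\inf_t \frac{A_{\mathbf{n}}(\{f=t\})}{\mathfrak{m}(\{f\geq t\})}\;\geq\; \mathbf{h}(M),
\]
which, because $\mathbf{h}(M)$ carries $\min\{\mathfrak{m}(D_1),\mathfrak{m}(D_2)\}$ in its denominator, is only justified when $\mathfrak{m}(\{f\geq t\})\leq \mathfrak{m}(\{f<t\})$ for a.e.\ $t$. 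Your objection is correct and your counterexample is decisive: for $f$ a positive constant on a closed manifold the left-hand side of the lemma vanishes while the right-hand side does not, so the lemma is false as stated and requires the additional hypothesis you identify (super-level sets of measure at most $\mathfrak{m}(M)/2$, or equivalently a median-zero normalization before splitting a test function into $f_\pm$). The same hypothesis is silently needed where the lemma is invoked in the proof of Theorem \ref{cheeger1}. In short: your argument is the paper's argument, together with a correct diagnosis of a genuine gap present in both the statement and the paper's own proof.
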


\begin{proof}

Let $f\in C^1(M)$. From Lemma \ref{area}, we have
\begin{eqnarray*}
 \int_M F(\nabla f)\ d\mathfrak{m} &=& \int_0^\infty \left( \int_{f^{-1}(t)} dA_n \right)\ dt\\
                                 &=& \int_0^\infty A_n(\{f=t\})\ dt\\
                                 &=& \int_0^\infty \frac{A_n(\{f=t\}}{\mathfrak{m}(\{f\geq t\})}.\mathfrak{m}(\{f\geq t\})\ dt\\
                                 &\geq& \inf_t \frac{A_n(\{f=t\}}{\mathfrak{m}(\{f\geq t\})} \int_0^\infty \mathfrak{m}(\{f\geq t\})\ dt\\
                                 &\geq& \mathbf{h}(M) \int_M f\ d\mathfrak{m}.
\end{eqnarray*}

\end{proof}

We now state our Cheeger-type inequality: 

\begin{theorem}\label{cheeger1}
 Let $(M,F,\mathfrak{m})$ be a closed Finsler manifold such that $\sigma_F\leq \sigma$. Then
 \[
  \lambda_{1,p}(M)\geq \left( \frac{\mathbf{h}(M)}{\sigma p} \right)^{p}.
 \]

\end{theorem}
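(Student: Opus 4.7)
The plan is to adapt the classical Cheeger--Buser strategy for the $p$-Laplacian to the Finsler setting, with the extra factor $\sigma_F$ absorbing the non-reversibility of $F$. Let $f\in W^{1,p}(M)$ be a first eigenfunction. Since $\int_M |f|^{p-2}f\,d\mathfrak{m}=0$, the function $f$ must change sign, so at least one of $\{f>0\}$, $\{f<0\}$ has $\mathfrak{m}$-measure at most $\mathfrak{m}(M)/2$; let $u$ denote the corresponding non-negative truncation ($f_+$ or $f_-$). One cannot simply replace $f$ by $-f$ to restrict to $u=f_+$, because $J^{\ast}$ is not odd in the Finsler setting and hence $-f$ is not in general an eigenfunction of $\Delta_p$.

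Inserting $u$ (with a sign adjustment $-f_-$ in the $u=f_-$ case) as a test function in the weak formulation $\int_M F(\nabla f)^{p-2}\,d\varphi(\nabla f)\,d\mathfrak{m}=\lambda_{1,p}(M)\int_M |f|^{p-2}f\,\varphi\,d\mathfrak{m}$, and using the Legendre identity $df(\nabla f)=F(\nabla f)^2$, I will obtain $\int_{\mathrm{supp}(u)}F(\nabla f)^p\,d\mathfrak{m}=\lambda_{1,p}(M)\int_M u^p\,d\mathfrak{m}$. On $\{f>0\}$ one has $u=f$ and hence $F(\nabla u)=F(\nabla f)$, while on $\{f<0\}$ the gradient $\nabla u=J^{\ast}(-df)$ differs from $-\nabla f$; the estimate $F^{\ast}(-\alpha)\leq\sqrt{\sigma_F}\,F^{\ast}(\alpha)$, which is immediate from the definition of the $2$-uniform concavity constant applied with $\alpha=df$ and $\beta=-df$, yields $F(\nabla u)\leq\sqrt{\sigma_F}\,F(\nabla f)$ on that set. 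Combining the two cases gives $\int_M F(\nabla u)^p\,d\mathfrak{m}\leq \sigma_F^{p/2}\,\lambda_{1,p}(M)\int_M u^p\,d\mathfrak{m}$.

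For the Cheeger half of the argument, the positive $1$-homogeneity of $J^{\ast}$ together with $u\geq 0$ gives the chain rule $\nabla(u^p)=p\,u^{p-1}\nabla u$, and hence $F(\nabla u^p)=p\,u^{p-1}F(\nabla u)$. Because $\mathrm{supp}(u)$ has measure at most $\mathfrak{m}(M)/2$, every superlevel set $\{u^p>t\}$ with $t>0$ realizes the smaller side in the definition of $\mathbf{h}(M)$, so applying the preceding Cheeger-type lemma to $u^p$ gives $p\int_M u^{p-1}F(\nabla u)\,d\mathfrak{m}\geq \mathbf{h}(M)\int_M u^p\,d\mathfrak{m}$. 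Hölder's inequality with conjugate exponents $p$ and $p/(p-1)$ bounds the left-hand side by $p\bigl(\int_M u^p\,d\mathfrak{m}\bigr)^{(p-1)/p}\bigl(\int_M F(\nabla u)^p\,d\mathfrak{m}\bigr)^{1/p}$, and rearrangement yields $\int_M F(\nabla u)^p\,d\mathfrak{m}\geq (\mathbf{h}(M)/p)^p\int_M u^p\,d\mathfrak{m}$. Combining this with the previous paragraph and using $\sqrt{\sigma_F}\leq\sigma_F$ (as $\sigma_F\geq 1$) produces the claimed bound.

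The main obstacle is the test-function step in the case $u=f_-$: one must use $-f_-$ in the weak formulation to correctly extract the energy identity, and then pay a factor of $\sqrt{\sigma_F}$ per $F(\nabla u)$ to compare with $F(\nabla f)$ through the $2$-uniform concavity. The coarea step and Hölder step are formally identical to the Riemannian case; the entire Finsler contribution to the constant is concentrated in this single comparison.
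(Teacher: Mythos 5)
Your argument is correct, and it takes a genuinely different route from the paper's. The paper does not use the eigenfunction at all: it takes an arbitrary $f\in\mathcal{H}_0^p$, applies the coarea lemma to \emph{both} truncations $f_+^p$ and $f_-^p$, sums the two resulting inequalities, absorbs $F^*(Df_-)=F^*(-Df)\leq\sigma_F F^*(Df)$ into a single factor $\sigma_F$, applies H\"older, and takes the infimum over $\mathcal{H}_0^p$. You instead run the classical eigenfunction version: pick the nodal side of smaller measure, use the truncation as a test function in the weak formulation to get $\int_{\mathrm{supp}(u)}F(\nabla f)^p\,d\mathfrak{m}=\lambda_{1,p}\int u^p\,d\mathfrak{m}$, compare $F(\nabla u)$ with $F(\nabla f)$ via $F^*(-\alpha)\leq\sqrt{\sigma_F}\,F^*(\alpha)$, and then run the coarea/H\"older chain on $u^p$ alone. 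Two remarks on the trade-off. First, your version handles a point the paper's proof glosses over: the coarea lemma only yields $A_{\mathbf n}(\{h=t\})\geq\mathbf{h}(M)\,\mathfrak{m}(\{h\geq t\})$ when $\{h\geq t\}$ is the \emph{smaller} side of the partition, which for an arbitrary $f\in\mathcal{H}_0^p$ need not hold for the superlevel sets of $f_\pm^p$ (the paper's argument would need an additional median normalization $f\mapsto f-c$ to be airtight); your choice of the smaller nodal domain makes this step legitimate by construction. Second, the price you pay is the assumption that a first eigenfunction exists and is regular enough for the weak formulation and the coarea formula to apply to $f_\pm$ --- this is available from the He--Yin regularity results cited in the paper, but you should say so explicitly. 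Your constant $\bigl(\mathbf{h}(M)/(p\sqrt{\sigma_F})\bigr)^p$ is in fact slightly sharper than the stated $\bigl(\mathbf{h}(M)/(p\sigma)\bigr)^p$ before you relax $\sqrt{\sigma_F}\leq\sigma_F\leq\sigma$.
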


\begin{proof}

 Let $f$ be a smooth function on $M$. Define $f_+:= \max\{ f,0\}$ and $f_-:= \max\{ -f,0\}$. Then
 \begin{eqnarray*}
  \mathbf{h}(M)\int_M |f|^p\ d\mathfrak{m} &=& \mathbf{h}(M) \left(\int_M f_+^p\ d\mathfrak{m} + \int_M f_-^p\ d\mathfrak{m}\right)\\
                                          &\leq& \int_M F^*(Df_+^p)\ d\mathfrak{m} + \int_M F^*(Df_-^p)\ d\mathfrak{m}\\
                                          &=& p\left[ \int_M f_+^{p-1} F^*(Df_+)\ d\mathfrak{m}+\int_M f_-^{p-1}F^*(Df_-)\ d\mathfrak{m} \right]\\
                                          &\leq& p\sigma_F \int_M |f|^{p-1} F^*(Df) d\mathfrak{m}\\
                                          &\leq& p\sigma \left( \int_M |f|^p\ d\mathfrak{m}\right)^{\frac{p-1}{p}}
                                          \left( \int_M F^*(Df)^p\ d\mathfrak{m}\right)^{\frac{1}{p}}.
 \end{eqnarray*}
Hence,
\[
 \int_M F^*(Df)^p\ d\mathfrak{m} \geq \left(\frac{\mathbf{h}}{p\sigma}\right)^p \int_M |f|^p\ d\mathfrak{m}. 
\]
Taking the infimum over $\mathcal{H}_0^p(M)$, the inequality follows.

\end{proof}

In \cite{Yau}, Yau showed that on a $n$-dimensional compact Riemannian manifold without boun\-dary whose Ricci curvature is bounded from below by 
$(n-1)K$, the first eigenvalue can be bounded from below in terms of the diameter, the volume of the manifold and the constant $K$. The authors of 
\cite{YZ13} gave a finslerian version of this result for the non-linear Shen's Laplacian. As in \cite{YZ13}, we use the following Croke-type inequality
to obtain the general case:

\begin{proposition}\cite{ZS}\label{pp:yau}
Let $(M,F,d\mathfrak{m})$ be a closed Finsler $n$-dimensional manifold  satisfying $Ric\geq (n-1)K$ for some constant $K$, 
where $d\mathfrak{m}$ denotes either the Busemann-Hausdorff measure or 
the Holmes-Thompson
 measure. Then
 \[
  \mathbf{h}(M)\geq \frac{(n-1)\mathfrak{m}(M)}{2Vol(\mathbb{S}^{n-2})\sigma_F^{4n+\frac{1}{2}}diam(M)\int_0^{diam(M)}\mathfrak{s}_K^{n-1}(t)\ dt},
 \]
where $diam(M)$ denotes the diameter of $M$ and the function $\mathfrak{s}_K$ is defined by 
\[
 \mathfrak{s}_K(t):= \left\{ 
 \begin{array}{l}
  \frac{1}{\sqrt{K}} \sin(\sqrt{K}t),\hspace{0.8cm}  K>0,\\
  t, \hspace{3cm} K=0,\\
  \frac{1}{\sqrt{-K}} \sinh(\sqrt{-K}t),\  K<0.
 \end{array}
\right.
\]

\end{proposition}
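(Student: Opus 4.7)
The plan is to adapt Croke's classical Riemannian isoperimetric argument to the Finsler setting. I would fix a smooth hypersurface $\Gamma \subset M$ dividing $M$ into disjoint open pieces $D_1, D_2$ with $\mathfrak{m}(D_1) \leq \mathfrak{m}(D_2)$, and after swapping orientations if necessary assume $A_+(\Gamma) \leq A_-(\Gamma)$. It then suffices to produce an upper bound on $\mathfrak{m}(D_1)/A_+(\Gamma)$ whose reciprocal matches the asserted lower bound on $\mathbf{h}(M)$.

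First I would connect each pair $(x,y) \in D_1 \times D_2$ by a Finsler minimizing geodesic, which must cross $\Gamma$ at some point $z = z(x,y)$. Writing $x$ and $y$ as Finsler exponential images of vectors in $T_zM$ lying on opposite sides of $T_z\Gamma$ with lengths at most $\mathrm{diam}(M)$, the co-area formula of Lemma \ref{area} allows me to rewrite $\mathfrak{m}(D_1)$ and $\mathfrak{m}(D_2)$ as integrals over $\Gamma$ of Jacobians of the Finsler exponential map in the normal direction. Next I would invoke the Finsler Bishop–Gromov comparison under $\mathrm{Ric}\geq (n-1)K$, which bounds each such Jacobian above by a multiple of $\mathfrak{s}_K^{n-1}$, the multiple depending only on $n$ and on a power of $\sigma_F$ inherited from the comparisons $g_v \leq \sigma_F F^2$ used to convert between the Finslerian volume and the Riemannian volume determined by the reference vector $v$. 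The angular integration over unit tangent directions at $z$ produces the factor $\mathrm{Vol}(\mathbb{S}^{n-2})$.

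Combining these estimates and using $\mathfrak{m}(D_2) \geq \tfrac{1}{2}\mathfrak{m}(M)$, I expect an inequality of the form
\[
A_+(\Gamma)\, \mathrm{diam}(M) \int_0^{\mathrm{diam}(M)} \mathfrak{s}_K^{n-1}(t)\,dt \;\geq\; \frac{(n-1)\,\mathfrak{m}(D_1)\,\mathfrak{m}(M)}{2\,\mathrm{Vol}(\mathbb{S}^{n-2})\,\sigma_F^{4n+1/2}},
\]
from which the stated Cheeger bound follows by taking the infimum over $\Gamma$. The main obstacle is tracking the exponent $4n+\tfrac{1}{2}$ of $\sigma_F$ precisely: every passage between a Finsler norm and a reference Riemannian structure $g_v$, every use of the non-symmetric exponential, and the asymmetry between $A_+(\Gamma)$ and $A_-(\Gamma)$ costs a controlled power of $\sigma_F$, and careful bookkeeping of all these losses is what fixes the constant. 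Once that accounting is carried out, the remainder of the argument follows Croke's Riemannian template term-by-term.
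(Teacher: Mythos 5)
The paper does not prove this proposition at all: it is quoted verbatim from Shen--Zhao \cite{ZS} (the ``universal volume comparison theorem'' paper) and used as a black box, so there is no internal proof to measure your attempt against. Judged on its own terms, your sketch correctly identifies the Yau--Croke template: separate $M$ by $\Gamma$, join $D_1$ to $D_2$ by minimizing geodesics that must cross $\Gamma$, convert the two volumes into integrals over $\Gamma$ of exponential-map Jacobians, apply a Ricci comparison to bound those Jacobians by $\mathfrak{s}_K^{n-1}$, and finish with $\mathfrak{m}(D_2)\geq \tfrac12\mathfrak{m}(M)$. That is indeed the right skeleton.

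However, as written this is a plan rather than a proof, and the part that is deferred is precisely the content of the proposition. First, the entire quantitative claim lives in the exponent $4n+\tfrac12$ of $\sigma_F$, and you never derive it; ``careful bookkeeping of all these losses is what fixes the constant'' is exactly the step that must be carried out, since each passage between $F$, the reverse metric, the reference Riemannian structures $g_v$, and the measures $A_{\pm}$ contributes a specific power and the final exponent is not guessable. Second, the Finsler Bishop--Gromov comparison with an explicit, uniform $\sigma_F$-dependence that you invoke is itself the main theorem of \cite{ZS} --- you are assuming the substance of the result you are trying to prove. Third, Lemma \ref{area} is the wrong tool for the step where you cite it: it is a co-area formula for level sets of a function $\phi$, whereas what you need is a Fermi/polar-coordinate volume formula for the normal exponential map based on $\Gamma$ (together with a Jacobian comparison along those normal geodesics, handled separately for the two co-orientations $\mathbf{n}_{\pm}$ because the metric is non-reversible). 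Until those three points are filled in --- in particular an actual computation producing $\sigma_F^{4n+1/2}$ --- the argument does not establish the stated inequality.
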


From Theorem \ref{cheeger1} and Proposition \ref{pp:yau}, we obtain the following Yau-type estimate.

\begin{proposition}
 Let $(M,F,d\mathfrak{m})$ be a $n$-dimensional closed Finsler manifold whose Ricci curvature satisfies $Ric\geq (n-1)K$ for some real constant $K$, 
 where $d\mathfrak{m}$ denotes either the Busemann-Hausdorff measure or 
the Holmes-Thompson
 measure. Then
 \[
  \lambda_{1,p}(M)\geq \left(\frac{(n-1)\mathfrak{m}(M)}{2pVol(\mathbb{S}^{n-2})\sigma_F^{4n+\frac{3}{2}}diam(M)\int_0^{diam(M)}\mathfrak{s}_K^{n-1}(t)\ dt}
  \right)^p.
 \]
\end{proposition}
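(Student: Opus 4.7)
The plan is to simply chain the two previously established results: Theorem \ref{cheeger1}, which provides the Cheeger-type lower bound $\lambda_{1,p}(M) \geq (\mathbf{h}(M)/(\sigma p))^p$ whenever $\sigma \geq \sigma_F$, and Proposition \ref{pp:yau}, which lower-bounds the Cheeger constant $\mathbf{h}(M)$ in terms of the Ricci curvature bound, the diameter, the volume, and $\sigma_F$. Nothing new needs to be constructed; the statement is essentially a concatenation of these two inequalities.

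Concretely, first I would apply Theorem \ref{cheeger1} with $\sigma = \sigma_F$, obtaining
\[
\lambda_{1,p}(M) \;\geq\; \left(\frac{\mathbf{h}(M)}{\sigma_F\, p}\right)^{p}.
\]
Next I would substitute the lower bound from Proposition \ref{pp:yau} for $\mathbf{h}(M)$ directly into this expression. Since $x \mapsto x^{p}$ is monotone increasing on $[0,\infty)$, this substitution preserves the inequality and yields
\[
\lambda_{1,p}(M) \;\geq\; \left(\frac{(n-1)\mathfrak{m}(M)}{2p\,\mathrm{Vol}(\mathbb{S}^{n-2})\,\sigma_F \cdot \sigma_F^{4n+\frac{1}{2}}\,\mathrm{diam}(M)\int_0^{\mathrm{diam}(M)}\mathfrak{s}_K^{n-1}(t)\,dt}\right)^{p}.
\]
Finally, I would collect the $\sigma_F$ factors using $\sigma_F \cdot \sigma_F^{4n+\tfrac{1}{2}} = \sigma_F^{4n+\tfrac{3}{2}}$, which reproduces the bound exactly as stated.

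There is no real obstacle here; the only things worth double-checking are the exponent arithmetic on $\sigma_F$ (one factor coming from the Cheeger-type step, the other from the Croke-type estimate, combining additively in the exponent) and the fact that the hypothesis $\sigma_F \leq \sigma$ in Theorem \ref{cheeger1} is applied with the choice $\sigma := \sigma_F$, which is clearly allowed. All of the geometric and measure-theoretic content is already encapsulated in Proposition \ref{pp:yau}, so this final proposition is effectively a corollary obtained by direct substitution.
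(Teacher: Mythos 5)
Your proposal is correct and follows exactly the paper's own argument: the paper likewise combines Proposition \ref{pp:yau} with Theorem \ref{cheeger1} (taking $\sigma=\sigma_F$), and the exponent bookkeeping $\sigma_F\cdot\sigma_F^{4n+\frac{1}{2}}=\sigma_F^{4n+\frac{3}{2}}$ matches. Nothing further is needed.
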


\begin{proof}

 By the propositionosition \ref{pp:yau}, we have
 \[
  \frac{\mathbf{h}(M)}{p \sigma_F}\geq \frac{(n-1)\mathfrak{m}(M)}{2pVol(\mathbb{S}^{n-2})
  \sigma_F^{4n+\frac{3}{2}}diam(M)\int_0^{diam(M)}\mathfrak{s}_K^{n-1}(t)\ dt}.
 \]
A direct application of the theorem \ref{cheeger1} completes the proof.
  
\end{proof}

% \section*{Acknowledgments}
% This section should come before the References. Funding 
% information may also be included here.

\vspace{1cm}
{\bf Cyrille Combete}

Institut de Mathematiques et de Sciences Physiques, 
 B.P 613, Porto-Novo, Benin\\
{ \bf Email:} cyrille.combete@imsp-uac.org

\vspace{1cm}

{\bf Serge Degla}

Institut de Mathematiques et de Sciences Physiques,
 B.P 613, Porto-Novo, Benin\\
\& Ecole Normale Superieure, 
 B.P 72, Natitingou, Benin\\
{\bf Email:} sdegla@imsp-uac.org

\vspace{1cm}

{\bf Leonard Todjihounde}

Institut de Mathematiques et de Sciences Physiques, 
 B.P 613, Porto-Novo, Benin\\
{\bf Email:} leonardt@imsp-uac.org

\end{document}